\pgfplotsset{compat=1.17} 
\newcommand{\xmark}{\textcolor{red}{\ding{55}}}
\newcommand{\argmin}{\operatorname*{argmin}}
\newtheorem{definition}{Definition}
\newtheorem{assumption}{Assumption}
\newtheorem{lemma}{Lemma}
\newtheorem{claim}{Claim}
\newtheorem{remark}{Remark}
\newtheorem{theorem}{Theorem}
\newtheorem{corollary}{Corollary}
\newcommand{\PowerSGD}{{\texttt{\textbf{PowerSGD}}}}
\newcommand{\PowerSGDP}{{\texttt{\textbf{PowerSGD+}}}}
\title{From PowerSGD to PowerSGD+: Low-Rank Gradient Compression for Distributed Optimization with Convergence Guarantees}
\author[1]{Shengping Xie\thanks{\texttt{2300010702@stu.pku.edu.cn}}}
\author[1]{Chuyan Chen\thanks{\texttt{chuyanchen@stu.pku.edu.cn}}}
\author[2]{Kun Yuan\thanks{Corresponding author: \texttt{kunyuan@pku.edu.cn}}}
\affil[1]{School of Mathematical Sciences, Peking University}
\affil[2]{Center for Machine Learning Research, Peking University}
\date{}
\begin{document}

\maketitle
\begin{abstract}
Low-rank gradient compression methods, such as \PowerSGD, have gained attention in communication-efficient distributed optimization. However, the convergence guarantees of \PowerSGD\ remain unclear, particularly in stochastic settings. In this paper, we show that \PowerSGD\ does not always converge to the optimal solution and provide a clear counterexample to support this finding. To address this, we introduce \PowerSGDP, which periodically updates the projection subspace via singular value decomposition, ensuring that it remains aligned with the optimal subspace. We prove that \PowerSGDP\ converges under standard assumptions and validate its effectiveness through empirical evaluation on large language model tasks.
\end{abstract}
\vspace{-4pt}

\section{Introduction}
Distributed optimization has emerged as a fundamental paradigm for large-scale signal processing and machine learning. Matrix variables naturally arise in such applications. For example, the weights in each layer of a deep neural network are typically matrices. This paper considers a scenario in which $N$ computing nodes collaborate to solve the distributed optimization problem involving a matrix variable $\bm{X}\in \mathbb{R}^{m \times n} (m \ge n)$:
\vspace{-0.5em}
\begin{align}\label{prob-general}
\min_{\bm{X}\in \mathbb{R}^{m \times n}}\ &f(\bm{X}) := \frac{1}{N}\sum_{i=1}^N f_i(\bm{X}), \\
\mbox{where}\ \ \  &f_i(\bm{X}):={\mathbb{E}_{\xi^{(i)} \sim \mathcal{D}_i}}[F(\bm{X}; \xi^{(i)})]. \nonumber
\end{align}
 When $N=1$, Problem~\eqref{prob-general} reduces to a standard stochastic optimization problem with a vector variable. Here, $\xi^{(i)}$ is a random sample from the local distribution $\mathcal{D}_i$, and $f_i: \mathbb{R}^{m\times n} \rightarrow \mathbb{R}$ is the local loss function. Since each node $i$ can only access its own data to compute local gradient $\nabla F(\bm{X};\xi^{(i)})$, nodes must exchange gradients to obtain a global solution.

Centralized stochastic gradient descent (SGD) is a common approach to Problem~\eqref{prob-general}, where local gradients are globally averaged via either a central server \cite{li2014scaling} or ring All-Reduce primitives \cite{patarasuk2009bandwidth}. However, transmitting full gradients introduces substantial communication overhead, resulting in a key bottleneck in distributed learning \cite{seide20141, chilimbi2014project}. To mitigate this, various compression strategies have been developed. Quantization maps high-precision values to low-bit representations, with notable methods including 1-bit SGD \cite{seide20141}, QSGD \cite{alistarh2017qsgd}, and TernGrad \cite{wen2017terngrad}. Sparsification reduces transmission by discarding less informative entries, with popular examples such as random sparsification \cite{konevcny2016federated, wangni2018gradient} and top-$K$ sparsification \cite{stich2018sparsified}.

This paper studies low-rank communication compression, which projects transmitted gradients onto a low-rank subspace to exploit their inherent structure and reduce communication costs. In deep neural network training, the success of the LoRA fine-tuning method~\cite{hulora} demonstrates the effectiveness of low-rank approximations. Recent studies \cite{zhao2024galore, han2024sltrain, chen2024enhancing}  show that both weights and gradients in neural network pre-training exhibit low-rank structures, motivating a re-examination of low-rank compression in distributed learning.

\vspace{-0.2em}
\PowerSGD\ \cite{vogels2019powersgd} is a prominent low-rank gradient compression method in distributed learning. It employs power iteration to align low-rank projection matrices with the optimal rank-$r$ gradient approximation, replacing costly singular value decomposition (SVD) with QR decomposition and achieving strong empirical performance. Fully integrated into PyTorch, \PowerSGD\ supports efficient All-Reduce aggregation with error feedback, and has been deployed in large-scale systems such as OpenAI DALL-E \cite{ramesh2021zero} and AngelPTM \cite{nie2023angel}. Despite its practical success, its convergence guarantees under standard assumptions remain theoretically unproven.

\vspace{1mm}
\noindent \textbf{Contribution.} 
This paper targets to address the convergence guarantees issue in \PowerSGD\ . Surprisingly, we demonstrate that \PowerSGD\ does not always converge to the optimal solution and provide a clear counterexample to support this finding. To resolve this issue, we introduce \PowerSGDP, which incorporates a safeguard mechanism that periodically updates the projection subspace via SVD, ensuring the gradient approximation remains aligned with the optimal subspace. With this safeguard, we theoretically prove that \PowerSGDP\ \textbf{always} converges under standard assumptions. Our empirical evaluation on large language model tasks validates our results. 
\section{Preliminary}
\textbf{Error Feedback.}\label{error-feedback} 
Error feedback (EF) mitigates the adverse effects of compression by incorporating past residuals into subsequent gradient updates, thereby preserving more informative signals \cite{seide20141} \cite{karimireddy2019error}. EF21-MSGD \cite{richtarik2021ef21} extends this concept by maintaining a local gradient tracker for each node, reducing the impact of data heterogeneity and improving convergence rates. Building on this theoretical foundation, NEOLITHIC \cite{huang2022lower} establish lower bounds for distributed learning under communication compression.

\vspace{1mm}
\noindent \textbf{Contractive Compressor.} 
Contractive compressors are commonly employed in communication-efficient distributed optimization. Their key characteristic is that the compression error decreases in expectation as the underlying variable approaches zero. The formal definition of a contractive compressor is provided below:

\begin{definition}{(Contractive Compressor)}.
\label{contractive compressor}
    A compressor $\mathcal{C}$ is {contractive} if there exists a constant $\delta \in (0,1]$ such that for any matrix variable $\bm{\Delta}$, \begin{align}\label{equation2}{\mathbb{E}_{\mathcal{C}}}(\|\mathcal{C}(\bm{\Delta})-\bm{\Delta}\|_F^2) \leq (1-\delta)\|\bm{\Delta}\|_F^2,\end{align}
    where the expectation is taken for the random compressor $\mathcal{C}$.
\end{definition}
\begin{definition}{(Low-rank Compressor)}.
\label{low-rank compressor}
   A {low-rank compressor} $\mathcal{C}$ for a matrix $\bm{W} \in \mathbb{R}^{m \times n}(m \geq n)$ is defined as 
\begin{align}\label{equation3}\mathcal{C} (\bm{W})=\bm{P}\bm{P}^\top \bm{W},
\end{align}
where $\bm{P}\in \mathbb{R}^{m \times r}$ is a semi-orthogonal matrix (i.e., $\bm{P}^\top \bm{P}=I_r$) and $r \leq n$. For $\mathcal{C}$ in ~\eqref{equation3}, it is straightforward to obtain:
\begin{align}\label{equation4}\|\mathcal{C}(\bm{W})-\bm{W}\|_F^2+\|\mathcal{C}(\bm{W})\|_F^2=\|\bm{W}\|_F^2. 
\end{align}
\end{definition}
\begin{definition}{(Optimal rank-$r$ Compressor)}.
\label{svd compressor}
   An {optimal rank-$r$ compressor} is a low-rank compressor as defined in \eqref{equation3}, with the optimal projection matrix $\tilde{\bm{P}}$ that minimizes the compression error:
\begin{align}\label{equation5}
\tilde{\bm{P}} = \underset{\bm{P} \in \mathbb{R}^{m \times r}}\argmin \|\bm{P}\bm{P}^\top \bm{W} - \bm{W}\|_F^2. 
\end{align}
\end{definition}
\noindent The solution to problem \eqref{equation5} is provided SVD. Let $\bm{W} = \bm{U} \Sigma \bm{V}^\top$ be the singular value decomposition of $\bm{W}$. The optimal compressor is given by the first $r$ columns of the left singular matrix, $\tilde{\bm{P}} = \bm{U}[:, :r]$, which satisfies
\begin{align}\label{equation6}
\|\tilde{\bm{P}} \tilde{\bm{P}}^\top \bm{W} - \bm{W}\|_F^2 \leq \left( 1 - \frac{r}{n} \right) \|\bm{W}\|_F^2.
\end{align}
Apparently, the optimal rank-$r$ compressor is contractive. 

\vspace{1mm}
\noindent 
\textbf{Assumptions}. We present the basic assumptions used throughout our theoretical analysis, which are standard in the Error Feedback framework \cite{stich2019error}\cite{fatkhullin2023momentum}\cite{sun2019distributed}.

\begin{assumption}[Smoothness and lower boundedness]\label{assumption}
The global loss function $f(\bm{X})$ is lower-bounded and $L$-smooth.
\end{assumption}
\vspace{-0.5em}
\begin{assumption}[Stochastic gradient]\label{asp:SGO}
For each node $i$, the local stochastic gradient $\nabla F(\bm{X}; \xi^{(i)})$ is unbiased and has bounded variance, i.e.,
$\mathbb{E}_{\xi^{(i)} \sim \mathcal{D}_i}[\nabla F(\bm{X}; \xi^{(i)}] = \nabla f_i(\bm{X})$,
$\mathbb{E}_{{\xi}^{(i)} \sim \mathcal{D}_i}\big[ \|\nabla F(\bm{X}; \xi^{(i)}) - \nabla f_i(\bm{X})\|_F^2 \big] \le \sigma^2$ for any $\bm X$.
\end{assumption}
\begin{assumption}[Uniform gradient bound]\label{ass:uniform}
The full gradient is uniformly bounded, i.e., $\|\nabla f(\bm{X})\|_F^2 \leq \omega^2$.  
This implies that the second moment is also bounded, specifically, 
$
\mathbb{E} \left[ \Big\Vert\frac{1}{N}\sum_{i=1}^N\nabla F(\bm{X},\xi^{(i)} )\Big\Vert_F^2 \right] \leq G^2 \equiv \sigma^2 + \omega^2.
$
\end{assumption}

\noindent \textbf{QR decomposition.} In this paper, we use the function $\mathrm{QR}(\cdot)$ 
to denote matrix orthogonalization by economic QR decomposition: for $\bm{A}\in\mathbb{R}^{n\times r}$ ($n\ge r$), 
it orthogonalizes $\bm{A}=\bm{Q}\bm{R}$ and returns only 
$\bm{Q}\in\mathbb{R}^{n\times r}$ with $\bm{Q}^\top\bm{Q}=I_r$, while 
$\bm{R}$ is omitted.

\section{PowerSGD and its non-convergence}
\def\coloredtitle{\colorbox{red!20}{\PowerSGD}/\colorbox{green!20}{\PowerSGDP}}

\begin{algorithm}[t!]
	\caption{Rank-$r$ SSP/SVD compressor}
	\label{alg:powersgd-compression}
	\begin{algorithmic}[0]
		\State \textbf{Notation:} Set current update matrices $\bm{\Delta}^{(1)},\dots,\bm{\Delta}^{(N)} \in \mathbb{R}^{m \times n}$, previous auxiliary basis $\bm{Q} \in \mathbb{R}^{n \times r}$.
		\Function{SSP-Comp}{$\bm{Q}$, $\bm{\Delta}^{(1)},\dots,\bm{\Delta}^{(N)} $}:
		\State \textbf{on each node $i$}
		\State $\bm{P}^{(i)} \gets \bm{\Delta}^{(i)}\bm{Q}$
		\State $\bm{P} \gets \frac{1}{N}(\bm{P}^{(1)}+\dots+\bm{P}^{(N)})$ \Comment{All-Reduce Mean}
		\State $\tilde{\bm{P}} \gets \text{QR}(\bm{P})$
		\Comment{Power iteration}
		\State $\bm{Q}^{(i)} \gets (\bm{\Delta}^{(i)})^\top \tilde{\bm{P}}$
		\State $\bm{Q} \gets \frac{1}{N}(\bm{Q}^{(1)}+\dots+\bm{Q}^{(N)})$ \Comment{All-Reduce Mean}
		\State \Return $(\bm{Q},\tilde{\bm{P}}(\bm{Q}^{(i)})^\top,\tilde{\bm{P}}\bm{Q}^\top)$ 
		\EndFunction
		\Function{SVD-Comp}{$\bm{\Delta}^{(1)},\dots,\bm{\Delta}^{(N)}$}:
		\State \textbf{on each node $i$}
		\State $\bm{\Delta} \gets \frac{1}{N}(\bm{\Delta}^{(1)}+\dots+\bm{\Delta}^{(N)})$ \Comment{All-Reduce Mean}
		\State $\bm{U},\bm{\Sigma},\bm{V} \gets \text{SVD}(\bm{\Delta})$
		\State $\tilde{\bm{P}} \gets \bm{U}[:,:r]$ \Comment{SVD on full gradient}
		\State $\bm{Q}^{(i)} \gets (\bm{\Delta}^{(i)})^\top \tilde{\bm{P}}$
		\State $\bm{Q} \gets \frac{1}{N}(\bm{Q}^{(1)}+\dots+\bm{Q}^{(N)})$ \Comment{All-Reduce Mean}
		\State \Return $(\bm{Q},\tilde{\bm{P}}(\bm{Q}^{(i)})^\top,\tilde{\bm{P}}\bm{Q}^\top)$ 
		\EndFunction
	\end{algorithmic}
\end{algorithm}

As a well-known communication-efficient algorithm,  \PowerSGD~\cite{vogels2019powersgd} reduces communication by projecting the gradients onto a low-rank  subspace. To mitigate compression error, it employs an error-feedback mechanism that carries the previous compression residual forward into subsequent updates, thereby preserving more information. The \PowerSGD\ framework with MSGD optimizer is summarized below.
\begin{subequations}
\begin{align}
	&\ \bm{g}_t^{(i)} \gets \nabla F(\bm{X}_t, \xi_{t}^{(i)}), 
	   \quad \bm{\Delta}_t^{(i)} \gets \bm{g}_t^{(i)} + \bm{e}_t^{(i)}, \label{powerSGD-1}\\
	&\ \bm{Q}_t, \widehat{\bm{\Delta}}_t^{(i)}, \widehat{\bm{\Delta}}_t  
	   \gets \textsc{Comp}(\bm{Q}_{t-1},\{\bm{\Delta}_t^{(i)}\}_{i=1}^N), \label{powerSGD-2} \\
	&\ \bm{m}_t=\mu \bm{m}_{t-1} + \widehat{\bm{\Delta}}_t, 
	   \quad \bm{X}_{t+1} \gets \bm{X}_t - \eta \cdot \bm{m}_t. \label{powerSGD-3}
\end{align}
\end{subequations}
Here, Step~\eqref{powerSGD-1} is executed in parallel on each node $i$, where 
$\bm{\Delta}_t^{(i)}$ denotes the local stochastic gradient corrected by the previous 
compression residual $\bm{e}_t^{(i)}$. In Step~\eqref{powerSGD-2}, the function  
$\mbox{\textsc{Comp}}(\cdot)$ transforms both the local gradients $\bm{\Delta}_t^{(i)}$ and the 
averaged global gradient 
$\bm{\Delta}_t = \tfrac{1}{N}\sum_{i=1}^N \bm{\Delta}_t^{(i)}$ into their low-rank 
approximations,  $\widehat{\bm{\Delta}}_t^{(i)}$ and $\widehat{\bm{\Delta}}_t$ using a specific low-rank compressor $\mathcal{C_{\text{SSP}}}$. The matrix $\bm{Q}_t$ serves as the auxiliary basis for compression. 
Step~\eqref{powerSGD-3} then applies a standard momentum SGD update. Finally, each node updates 
its residual as 
$\bm{e}_{t+1}^{(i)} \gets \bm{\Delta}_t^{(i)} - \widehat{\bm{\Delta}}_t^{(i)}$.

\vspace{1em}
\noindent \textbf{Single-step power iteration.} 
The low rank compressor $\mathcal{C_{\text{SSP}}}$ in \PowerSGD\ relies on single-step power iteration. 
Given the global gradient $\bm{\Delta}_t$ and an initial basis 
$\bm{Q}_{-1} \in \mathbb{R}^{n\times r}$, the standard power iteration runs for 
$k = 0, \ldots, K-1$ as
\begin{align}
    \bm{{\tilde{P}}}_{k} = \mathrm{QR}(\bm{\Delta}_{t} \bm{Q}_{k-1}), \qquad \bm{Q}_k &= \bm{\Delta}_t^\top \bm{\tilde{P}}_k,
\end{align}
With the resulting projection 
matrices, $\widehat{\bm{\Delta}}_t = \bm{P}_K \bm{Q}_K^\top$ serves as a near-optimal 
rank-$r$ approximation of $\bm{\Delta}_t$. To reduce both computation and communication, \PowerSGD\ employs only a 
\emph{single iteration} in compressor $\mathcal{C}(\cdot)$ of step \eqref{powerSGD-2}:

\vspace{-1.5em}
\begin{align}\label{single-power}
\bm{\tilde{P}}_{t} = \mathrm{QR}(\bm{\Delta}_{t} \bm{Q}_{t-1}), \ \bm{Q}_t = \bm{\Delta}_t^\top \bm{\tilde{P}}_t, \ \widehat{\bm{\Delta}}_{t} =\mathcal{C}_{\text{SSP}}(\bm{\Delta}_{t})=\bm{\tilde{P}}_{t} \bm{Q}_t^\top=\bm{\tilde{P}}_{t} \bm{\tilde{P}}_t^\top\bm{\Delta}_{t}.
\end{align}

This single-step scheme is implemented efficiently across $N$ nodes as the 
\textsc{SSP-Comp}$(\cdot)$ function in Algorithm~\hyperref[alg:powersgd-compression]{1}, 
where \textsc{SSP} denotes ``single-step power iteration''. The implementation of \colorbox{red!20}{\PowerSGD} is in Algorithm~\hyperref[powersgdalgo]{2}. 
\begin{figure}[t!]
\centering
\begin{minipage}{0.477\textwidth}
    \raggedleft
    \includegraphics[width=6.45cm, height=6.0cm]{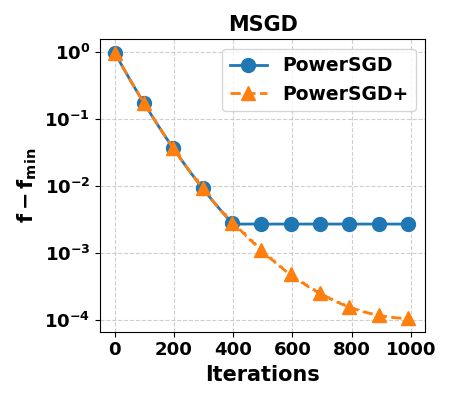}
\end{minipage}
\begin{minipage}{0.477\textwidth}
    \includegraphics[width=6.45cm, height=6.0cm]{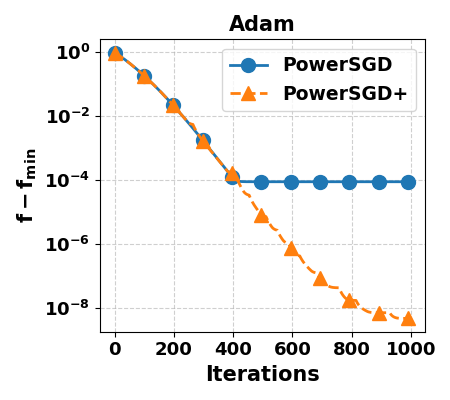}
\end{minipage}
\vspace{-0.6em}
\caption{\small Loss curves of \PowerSGD\ and \PowerSGDP\ using MSGD and Adam.}
\label{figure1}
\end{figure}

\vspace{1em}
\noindent \textbf{Non-convergence of PowerSGD.} With only one power step, \PowerSGD\ acts as an online subspace-tracking method. Compared to standard power iteration, SSP uses $\bm{Q}_{t-1}$ as an estimation of $\bm{Q}_{t}$ in ~\eqref{single-power}, which implicitly {\em assumes the leading subspace of the gradient $\bm \Delta_t$ changes slowly}. When this assumption is violated—such as under rapidly varying or highly noisy gradients—the projection matrix may lock onto an incorrect or even orthogonal subspace, preventing convergence to the desired solution. 

To illustrate this insight, we construct an example with the objective function and stochastic gradient in \eqref{prob-general} defined as
\begin{subequations}
\label{counter}
\begin{align}
f_i(\bm{X})&=\psi(\bm{D}^\top\bm{X}\bm{D}), \label{counter-1}\\
\nabla F(\bm{X};\xi^{(i)})&=\nabla f(\bm{X})+\xi^{(i)}\big(\sigma\bm{B}-\nabla f(\bm{X})\big). \label{counter-2}
\end{align} 
\end{subequations}
where $\bm{X} \in \mathbb{R}^{2 \times 2}$ is the optimization variable, $\bm{D}=[1;-1]\in \mathbb{R}^2$, $\bm{B}=[1,1;1,1]\in \mathbb{R}^{2\times 2}$ and $\sigma\geq 0$. The function $\psi(\cdot)$ is defined as $\psi(x) = x^2$ if $|x|\le 1$ and $\psi(x) = 2|x|-1$ otherwise. At each iteration, the noise variable $\xi^{(i)}$ is uniformly sampled from $\{-1, 1\}$ independently across all nodes. It is straightforward to verify that problem~\eqref{counter} satisfies Assumptions~\ref{assumption}--\ref{ass:uniform}. Moreover, for large $\sigma$, the stochastic gradient in~\eqref{counter-2} fluctuates significantly due to the randomness of $\xi^{(i)}$. We show that \PowerSGD\ fails to converge for this example:
\begin{theorem}[Non-Convergence of \PowerSGD]
\label{theorem1}
There exist local functions $f_i: \mathbb{R}^{2\times 2}\to \mathbb{R}$ and a stochastic gradient oracle satisfying Assumptions ~\ref{assumption}--~\ref{ass:uniform} such that for any choice of initialization $\bm{Q}=\bm{Q}_{-1}$, learning rate schedule $\eta_t$, and base optimizer (e.g., SGD, MSGD, Adam), there is a constant $\epsilon_0=\epsilon(\bm X_0) > 0$ for which the gradient generated by \PowerSGD\ at any time step $t$ is bounded away from $0$ (See proof in Appendix ~\hyperref[appendixa]{A}):
\vspace{-1em}
$$\mathbb{E}(\|\nabla f(\bm{X}_t)\|_F^2) \geq \epsilon_0.$$
\end{theorem}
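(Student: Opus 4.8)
The plan is to rotate the $2\times2$ dynamics into the eigenbasis of the injected noise, reduce the question to a scalar recursion, and show that the single power step locks the projector onto the noise direction, which is orthogonal to $\nabla f$, so the optimization coordinate is essentially never updated.

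\emph{Reduction to a scalar.} Set $\bm a=\tfrac1{\sqrt2}[1;1]$ and $\bm b=\tfrac1{\sqrt2}[1;-1]=\tfrac1{\sqrt2}\bm D$, an orthonormal basis of $\mathbb R^2$. Then $\bm B=2\bm a\bm a^\top$, $\bm D\bm D^\top=2\bm b\bm b^\top$, and writing $\rho:=\langle\bm X,\bm b\bm b^\top\rangle_F$ one gets $\bm D^\top\bm X\bm D=2\rho$, hence $f(\bm X)=\psi(2\rho)$, $\nabla f(\bm X)=2\psi'(2\rho)\,\bm b\bm b^\top$, and $\|\nabla f(\bm X)\|_F^2=4\psi'(2\rho)^2$. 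The stochastic gradient is $\nabla F(\bm X;\xi)=2\xi\sigma\,\bm a\bm a^\top+2(1-\xi)\psi'(2\rho)\,\bm b\bm b^\top$: in the $\{\bm a,\bm b\}$ coordinates it is \emph{diagonal}, with a large entry $\pm2\sigma$ along $\bm a\bm a^\top$ and a uniformly bounded entry along $\bm b\bm b^\top$. Since $\psi'$ is increasing and vanishes only at $0$, it is enough to choose $\bm X_0$ with $\rho_0\neq0$ and prove $\rho_t$ stays bounded away from $0$.

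\emph{Locking of the power step.} In $\{\bm a,\bm b\}$ coordinates write $\bm\Delta_t=\begin{pmatrix}A_t&B_t\\ C_t&D_t\end{pmatrix}$. Since $\widehat{\bm\Delta}_t=\tilde{\bm P}_t\tilde{\bm P}_t^\top\bm\Delta_t$ and the residual $\bm e_{t+1}=(I-\tilde{\bm P}_t\tilde{\bm P}_t^\top)\bm\Delta_t$ both have rank $\le1$, $\bm\Delta_t$ is the diagonal fresh-gradient matrix plus a rank-one residual, and the off-diagonal entries satisfy recursions driven by $\det\bm\Delta_t$. I would prove by induction that for $\sigma$ large enough (depending only on $\bm X_0$): whenever $\bm Q_{t-1}$ makes a small angle with $\pm\bm a$, so do $\tilde{\bm P}_t=\mathrm{QR}(\bm\Delta_t\bm Q_{t-1})$ and $\bm Q_t=\bm\Delta_t^\top\tilde{\bm P}_t$. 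The mechanism: $\bm\Delta_t$ stays within $O(|D_t|/\sigma)$ of $\mathrm{diag}(\pm2\sigma,D_t)$, of which $\bm a$ is an eigenvector, and a single power step multiplies the tangent of the angle between $\bm Q$ and $\bm a$ by a factor of order $|D_tD_{t-1}|/(2\sigma)^2\ll1$ as long as $|D_t|\ll\sigma$. So $\tilde{\bm P}_t$ hugs $\pm\bm a$, i.e.\ the direction \emph{orthogonal} to $\nabla f$.

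\emph{Consequences and conclusion.} While the projector is locked, $(\widehat{\bm\Delta}_t)_{bb}=O(\text{angle}^2\cdot\|\bm\Delta_t\|_F)$ is negligible, so for any base optimizer (which merely applies a bounded filter to $\widehat{\bm\Delta}_t$) the cumulative move of $\rho_t$ is tiny: $\rho_t$ remains within $|\rho_0|/2$ of $\rho_0$, giving $\|\nabla f(\bm X_t)\|_F^2\ge 4\psi'(\rho_0)^2>0$ throughout. At the same time the error-feedback recursion $D_{t+1}=(\nabla F(\bm X_{t+1};\xi_{t+1}))_{bb}+D_t-(\widehat{\bm\Delta}_t)_{bb}$ shows $D_t$ accumulates the essentially frozen value $\approx2\psi'(2\rho_0)$ and grows linearly; once $|D_t|\sim\sigma$ the lock breaks and the accumulated residual flushes into $\rho_t$ over $O(1)$ steps, shifting it by $\Theta(\eta\sigma)$ toward $0$ (a delayed, over-sized gradient step). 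One then argues that the post-flush values of $\rho$ follow, to leading order, the tent-type map $\rho\mapsto\rho-\Theta(\eta\sigma)\,\mathrm{sign}(\rho)$, whose orbit stays bounded away from $0$, that the projector re-locks onto $\pm\bm a$ within $O(1)$ steps, and that the random flush windows occupy a vanishing fraction of iterations---so that a prescribed $t$ lies in one only with probability $o(1)$. Hence $\mathbb E\|\nabla f(\bm X_t)\|_F^2\ge\epsilon_0$ for a constant $\epsilon_0>0$ determined by $\bm X_0$ (and $\sigma,\{\eta_t\}$), at every $t$.

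\emph{Main obstacle.} The genuine difficulty is the \emph{uniform-in-$t$} bookkeeping: keeping the coupled recursion for the angle of $\bm Q_t$, the residual $\bm e_t$, and $\rho_t$ under control through arbitrarily many flush cycles and for arbitrary step-size schedules and base optimizers. Concretely one must (i) bound the transient angle excursion during each flush so that $\rho_t$ never receives an uncontrolled kick, (ii) show re-locking onto $\pm\bm a$ after each flush, and (iii) exploit the randomness of $\xi$ to rule out a flush window coinciding with a prescribed $t$ with non-vanishing probability. Choosing $\sigma$ sufficiently large (as a function of $\bm X_0$) is what makes the lock strong enough for all of this; the momentum/Adam generalization is then routine, since the $\bm b\bm b^\top$-drive is negligible at essentially every step and the optimizer's internal state has nothing of substance to integrate.
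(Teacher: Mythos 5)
Your counterexample and your core intuition (the single power step locks the projector onto the noise direction $\bm a\bm a^\top$, which is exactly orthogonal to $\nabla f\in\operatorname{span}(\bm b\bm b^\top)$) match the paper's. But your proposed route --- an \emph{approximate} lock for large $\sigma$, with angle contraction, periodic ``flush'' cycles, a tent-map orbit, and a probabilistic argument that a prescribed $t$ avoids a flush window --- is both unnecessary and, as written, has a genuine gap. The paper's proof is far simpler: it conditions on the single event $\xi_0^{(1)}=\cdots=\xi_0^{(N)}=1$ (probability $2^{-N}$), on which $\bm\Delta_0^{(i)}=\sigma\bm B$ exactly, so the very first power step returns $\tilde{\bm P}_0=\bm a$ \emph{exactly}. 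From then on everything is a deterministic induction using the exact identities $\bm A\bm a=0$ and $\bm B\bm a=2\bm a$ (with $\bm A=2\bm b\bm b^\top$): the residuals $\bm e_t^{(i)}$ stay in $\operatorname{span}(\bm A)$, the power step $\bm\Delta_t\bm Q_{t-1}$ annihilates them and lands exactly in $\operatorname{span}(\bm a)$, so $\tilde{\bm P}_t=\bm a$ and $\widehat{\bm\Delta}_t\in\operatorname{span}(\bm B)$ for \emph{all} $t$ and \emph{all} subsequent noise realizations. Hence $\bm X_t-\bm X_0\in\operatorname{span}(\bm B)$, which leaves $\bm D^\top\bm X_t\bm D$ and therefore $\nabla f(\bm X_t)$ literally constant, and $\mathbb E\|\nabla f(\bm X_t)\|_F^2\ge 2^{-N}\cdot 4[\psi'(a-b-c+d)]^2$. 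No large-$\sigma$ asymptotics, no uniform-in-$t$ bookkeeping.

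The concrete gap in your version: your central dynamical claim --- that the error-feedback component $D_t$ grows until $|D_t|\sim\sigma$, at which point ``the lock breaks and the accumulated residual flushes into $\rho_t$'' --- is false for this construction once alignment is achieved. The accumulated residual lives in $\operatorname{span}(\bm A)$, and $\bm A\bm a=0$ means the power step is blind to it no matter how large it grows; the lock never breaks, so there are no flush cycles to control. Conversely, if you refuse to condition on the $t=0$ event and insist on handling a generically misaligned $\bm Q_{-1}$ with only an approximate lock, then every hard step you yourself list as ``the main obstacle'' is genuinely unproven: the flush magnitude $\Theta(\eta_t\sigma)$ depends on the adversarially chosen schedule $\eta_t$, so the tent-map orbit can be steered arbitrarily close to (or onto) $\rho=0$; re-locking after a flush is asserted, not shown; and for Adam the per-coordinate normalization can amplify the ``negligible'' $O(\text{angle}^2)$ leak in the $\bm b\bm b^\top$ direction into an $O(\eta)$ step, so the leak is not cumulatively harmless over arbitrarily many iterations. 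The exact-alignment argument sidesteps all of this (for Adam, $\widehat{\bm\Delta}_t=c_t\bm B$ has four identical entries, so any coordinate-wise optimizer keeps the update in $\operatorname{span}(\bm B)$). As it stands, your proposal is a plan with the right target but without a workable proof of its key lemmas.
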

\noindent Figure~\ref{figure1} illustrates a more general counterexample through 
numerical experiments. The plots show that \PowerSGD\ fails to converge 
under both MSGD and Adam, leading to stagnation of the training loss.

\begin{table*}[htb!]
\small
\centering
\caption{\small GLUE test results for fine-tuning \texttt{RoBERTa-Base} (higher is better). ``Avg'' is the mean across tasks.}
\label{table:glue}
\footnotesize
\resizebox{\textwidth}{!}{
\begin{tabular}{l|cccccccc|c}
\toprule
\textbf{Algorithm} & \textbf{CoLA} & \textbf{STS-B} & \textbf{MRPC} & \textbf{RTE} & \textbf{SST-2} & \textbf{MNLI} & \textbf{QQP} & \textbf{QNLI} & \textbf{Avg} \\
\midrule
Full precision       & \textbf{62.89} & 90.29 & 92.41 & \textbf{79.78} & \textbf{94.38} & 87.20 & \textbf{91.74} & \textbf{92.42} & \textbf{86.35} \\
\midrule
PowerSGD ($r=4$)    & 60.58 & 90.65 & 91.68 & 78.33 & 94.15 & 87.47 & 91.17 & 92.07 & 85.73 \\
GaLore ($r=4$)       & 62.64 & 89.68 & 91.96 & 77.62 & 94.15 & \textbf{87.50} & 90.53 & 92.07 & 85.70 \\
\textbf{PowerSGD+ ($r=4$)} & 62.85 & \textbf{90.67} & \textbf{92.54} & 79.42 & \textbf{94.38} & 87.08 & 91.14 & 92.13 & 86.28 \\
\bottomrule
\end{tabular}
}
\vspace{-2em}
\end{table*}

\begin{algorithm}[t!]
\label{powersgdalgo}
\caption{\coloredtitle}
\textbf{Require:} stepsize $\eta$; momentum parameter $0 \leq \mu < 1$; restart step $\tau$; compression rank $r$; number of workers $N$.\\
\textbf{Initialize:} initial weight $\bm{X}_0 \in \mathbb{R}^{m \times n}$, reflection matrix $\bm{Q}_{-1} \in \mathbb{R}^{n \times r}$, momentum $\bm{m}_{-1}= 0_{m \times n}$, error $\bm{e}_0^{(i)}=0_{m \times n}$ for worker $i$. All variables are set separately on all nodes;
\begin{algorithmic}[0]
\For{$t = 0, \ldots, T - 1$}
\State \textbf{on each node $i$}\\
\hspace{4.5mm} Update $\bm{g}_t^{(i)}$ and $\bm{\Delta}_t^{(i)}$ as in \eqref{powerSGD-1}. \\

\noindent \hspace{-0.3em}\colorbox{red!20}{\parbox{\dimexpr\linewidth-1\fboxsep}{%
    \State $\bm{Q}_t,\widehat{\bm{\Delta}}_t^{(i)},\widehat{\bm{\Delta}}_t\gets \Call{QR-COMP}{\bm{Q}_{t-1},{\{\bm{\Delta}_t^{(i)}}\}_{i=1}^N}$
    \State \hfill \makebox[0pt][r]{(\PowerSGD )}
}}

\noindent \hspace{-0.3em}\colorbox{green!20}{\parbox{\dimexpr\linewidth-1\fboxsep}{%
    \If{$\text{mod}(t, \tau) \neq 0$}
        \State $\bm{Q}_t,\widehat{\bm{\Delta}}_t^{(i)},\widehat{\bm{\Delta}}_t\gets \Call{QR-COMP}{\bm{Q}_{t-1},\{{\bm{\Delta}_t^{(i)}}\}_{i=1}^N}$
    \EndIf
    \If{$\text{mod}(t, \tau) = 0$}
        \State $\bm{Q}_t,\widehat{\bm{\Delta}}_t^{(i)},\widehat{\bm{\Delta}}_t\gets \Call{SVD-COMP}{\{{\bm{\Delta}_t^{(i)}}\}_{i=1}^N}$
    \EndIf \hfill \makebox[0pt][r]{(\PowerSGD+ )}
}}        
\State $\hspace*{\dimexpr-\algorithmicindent} \bm{e}_{t+1}^{(i)} \gets \bm{\Delta}_t^{(i)} - \widehat{\bm{\Delta}}_t^{(i)}$\\
\hspace{4.5mm} Update $\bm{m}_t$ and $\bm{X}_{t+1}$ as in \eqref{powerSGD-3}.
\EndFor
\State \hspace{-5.3mm}\Return ${\{\bm{X}_t\}}_{t=0}^{T}$
\end{algorithmic}
\end{algorithm}

\section{Safeguard to ensure convergence}
\noindent \textbf{From PowerSGD to PowerSGD+.} \PowerSGD\ relies on a single-step power iteration throughout the optimization process, which makes it susceptible to misalignment between the identified subspace and the true gradient subspace. To address this issue, we introduce \textbf{\PowerSGD+}, a simple variant that periodically resets the projection subspace by recomputing the optimal rank-$r$ approximation of the gradient. This periodic reset acts as a safeguard, ensuring alignment with the evolving gradient and guaranteeing convergence.

As shown in Algorithm~\hyperref[powersgdalgo]{2}, \colorbox{green!20}{\PowerSGD+} periodically replaces compressor $\mathcal{C}_{\text{SSP}}$ with the optimal rank-$r$ compressor $\mathcal{C}_{\text{SVD}}$ in Eq.~\eqref{equation5}. The full implementation of $\mathcal{C}_{\text{SVD}}$ is shown in function \textsc{SVD-Comp}$(\cdot)$ . This involves performing an SVD on the full gradient and transmitting the entire matrix. While full-gradient communication is costly, it occurs only once every $\tau$ iterations, making the communication overhead minimal and less impactful on overall performance.

\vspace{1mm}
\noindent \textbf{Convergence guarantees.} By periodically applying the optimal rank-$r$ compression, which prevents the identified gradient subspace from deviating too much during single-step power iteration, we can establish convergence guarantees.
\allowdisplaybreaks 
\begin{theorem}{(Convergence of \PowerSGD+\ )}
\label{theorem2}Under Assumption ~\ref{assumption}-~\ref{ass:uniform}, Define $\delta=r/n$ and $\Delta F=f(\bm{X}_0)-\underset{}{\text{min}}\ f(\bm{X})$. If we set the learning rate $\eta$ properly, \PowerSGD+\ with MSGD (Algorithm \hyperref[powersgdalgo]{2}) converges as follows (See proof in Appendix ~\hyperref[appendixb]{B})
\begin{align*}
\ \frac{1}{T}\sum_{t=0}^{T - 1} \mathbb{E} \left[ \|\nabla f(\bm{X}_t)\|_F^2 \right]
\leq\ \sqrt{\frac{32L\sigma^2\Delta F}{{NT}}} +\frac{8L\Delta F}{T}+\frac{6\sqrt[3]{274}(L\tau G \Delta F)^{2/3} }{T^{2/3}\delta^{2/3}(1-\mu)^{2/3}}.
\end{align*}
\end{theorem}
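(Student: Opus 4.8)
The plan is to run the error-feedback-with-momentum template, where the only non-standard ingredient is that the compressor is contractive only on the periodic SVD-reset steps. Write $\bar{\bm g}_t=\frac1N\sum_i\bm g_t^{(i)}$, $\bar{\bm e}_t=\frac1N\sum_i\bm e_t^{(i)}$, $\widehat{\bm\Delta}_t=\frac1N\sum_i\widehat{\bm\Delta}_t^{(i)}$, and $\gamma=\eta/(1-\mu)$. First I would record the exact error-feedback identity $\widehat{\bm\Delta}_t=\bar{\bm g}_t+\bar{\bm e}_t-\bar{\bm e}_{t+1}$ (immediate from $\bm e_{t+1}^{(i)}=\bm\Delta_t^{(i)}-\widehat{\bm\Delta}_t^{(i)}$ and $\bm\Delta_t^{(i)}=\bm g_t^{(i)}+\bm e_t^{(i)}$) and introduce the virtual iterate $\tilde{\bm z}_t:=\bm X_t-\gamma\mu\,\bm m_{t-1}-\gamma\,\bar{\bm e}_t$. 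Unrolling the momentum recursion $\bm m_t=\mu\bm m_{t-1}+\widehat{\bm\Delta}_t$ into $\bm X_{t+1}=\bm X_t-\eta\bm m_t$ and using the identity, a direct computation gives $\tilde{\bm z}_{t+1}=\tilde{\bm z}_t-\gamma\,\bar{\bm g}_t$, with $\tilde{\bm z}_0=\bm X_0$ since $\bm m_{-1}=0$ and $\bm e_0^{(i)}=0$; thus the virtual iterate performs plain SGD with step $\gamma$ on the stochastic oracle.

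Next I would apply $L$-smoothness at $\tilde{\bm z}_t$, take conditional expectation using $\mathbb{E}[\bar{\bm g}_t\mid\mathcal{F}_t]=\nabla f(\bm X_t)$ and $\mathbb{E}[\|\bar{\bm g}_t\|_F^2\mid\mathcal{F}_t]\le\|\nabla f(\bm X_t)\|_F^2+\sigma^2/N$ (Assumptions~\ref{asp:SGO}--\ref{ass:uniform}), and pass from $\nabla f(\tilde{\bm z}_t)$ to $\nabla f(\bm X_t)$ by smoothness and Young's inequality. For $\gamma L\le\frac12$ this yields the one-step inequality
\[
\mathbb{E} f(\tilde{\bm z}_{t+1})\le\mathbb{E} f(\tilde{\bm z}_t)-\tfrac{\gamma}{4}\mathbb{E}\|\nabla f(\bm X_t)\|_F^2+\tfrac{\gamma L^2}{2}\mathbb{E}\|\tilde{\bm z}_t-\bm X_t\|_F^2+\tfrac{L\gamma^2\sigma^2}{2N},
\]
and telescoping with $f(\tilde{\bm z}_0)-f(\tilde{\bm z}_T)\le\Delta F$ reduces everything to bounding $\frac1T\sum_t\mathbb{E}\|\tilde{\bm z}_t-\bm X_t\|_F^2\le 2\gamma^2\mu^2\cdot\frac1T\sum_t\mathbb{E}\|\bm m_{t-1}\|_F^2+2\gamma^2\cdot\frac1T\sum_t\mathbb{E}\|\bar{\bm e}_t\|_F^2$.

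The core is controlling the residual, and here the SVD safeguard — and the gap with Theorem~\ref{theorem1} — is essential. On a step that uses $\mathcal{C}_{\text{SSP}}$ I can only claim the universal low-rank bound $\|\bar{\bm e}_{t+1}\|_F^2\le\|\bar{\bm\Delta}_t\|_F^2$ from~\eqref{equation4}, i.e.\ $\|\bar{\bm e}_{t+1}\|_F\le\|\bar{\bm g}_t\|_F+\|\bar{\bm e}_t\|_F$; unrolling this over one reset period of length $\tau$ shows the residual absorbs at most $\tau$ bounded stochastic gradients, so $\mathbb{E}\|\bar{\bm e}_{k\tau+j}\|_F^2\le(1+\alpha)\,\mathbb{E}\|\bar{\bm e}_{k\tau+1}\|_F^2+(1+1/\alpha)\tau^2G^2$ for every $\alpha>0$. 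On the reset step $\mathcal{C}_{\text{SVD}}$ is contractive with factor $1-\delta$, $\delta=r/n$ (by~\eqref{equation6}), hence $\mathbb{E}\|\bar{\bm e}_{(k+1)\tau+1}\|_F^2\le(1-\delta)\,\mathbb{E}\|\bar{\bm g}_{(k+1)\tau}+\bar{\bm e}_{(k+1)\tau}\|_F^2$. Combining and choosing the Young parameters $\propto\delta$ so that the combined factor $(1-\delta)(1+\alpha)(1+\lambda)$ is a genuine contraction $1-\Theta(\delta)$, I get the period-to-period recursion $a_{k+1}\le(1-\Theta(\delta))\,a_k+O(\tau^2G^2/\delta)$ for $a_k:=\mathbb{E}\|\bar{\bm e}_{k\tau+1}\|_F^2$; with $a_0\le(1-\delta)G^2$ (step $0$ is a reset and $\bm e_0=0$) this gives $a_k=O(\tau^2G^2/\delta^2)$ uniformly, and summing the per-period bounds yields $\frac1T\sum_{t}\mathbb{E}\|\bar{\bm e}_t\|_F^2=O(\tau^2G^2/\delta^2)$. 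For the momentum term, $\widehat{\bm\Delta}_s=\tilde{\bm P}_s\tilde{\bm P}_s^{\top}\bar{\bm\Delta}_s$ is an orthogonal projection, so $\|\widehat{\bm\Delta}_s\|_F^2\le\|\bar{\bm\Delta}_s\|_F^2\le2\|\bar{\bm g}_s\|_F^2+2\|\bar{\bm e}_s\|_F^2$; expanding $\bm m_{t-1}=\sum_{s<t}\mu^{t-1-s}\widehat{\bm\Delta}_s$, summing the geometric weights and using the residual bound gives $\frac1T\sum_t\mathbb{E}\|\bm m_{t-1}\|_F^2=O\!\big(\tau^2G^2/((1-\mu)^2\delta^2)\big)$. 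Substituting both back and recalling $\gamma=\eta/(1-\mu)$ produces, for $\eta\le(1-\mu)/(2L)$ and absolute constants $C_1,C_2,C_3$,
\[
\frac1T\sum_{t=0}^{T-1}\mathbb{E}\|\nabla f(\bm X_t)\|_F^2\le\frac{C_1(1-\mu)\Delta F}{\eta T}+\frac{C_2 L\eta\sigma^2}{(1-\mu)N}+\frac{C_3 L^2\eta^2\tau^2G^2}{(1-\mu)^4\delta^2};
\]
choosing $\eta$ as the minimum of $(1-\mu)/(2L)$, the value balancing the first two terms, and the value balancing the first and third, and invoking the standard step-size tuning lemma, collapses this to the three stated terms.

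The main obstacle is precisely this residual bound: because $\mathcal{C}_{\text{SSP}}$ can lock onto the wrong subspace — the very mechanism behind Theorem~\ref{theorem1} — the residual is not contracted at every step and the textbook error-feedback recursion diverges. The delicate point is the per-period bookkeeping: one must balance the at most $O(\tau)$ gradients absorbed between resets against the single $(1-\delta)$ contraction at a reset, and tune the Young-inequality parameters $\propto\delta$ so that the period-to-period recursion genuinely contracts and the residual inflates only by the usual $1/\delta^2$. Once this lemma is in place, the virtual-iterate bookkeeping, the smoothness descent, and the final $\eta$-optimization are routine; the factors $\tau^{2/3}$ and $\delta^{-2/3}$ in the final rate are exactly the price of replacing the per-step contractivity that \PowerSGD\ lacks with the periodic contractivity that \PowerSGDP\ enforces.
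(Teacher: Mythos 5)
Your proposal is correct and follows essentially the same route as the paper: the virtual iterate $\bm{Z}_t=\bm{X}_t-\frac{\eta\mu}{1-\mu}\bm{m}_{t-1}-\frac{\eta}{1-\mu}\bar{\bm{e}}_t$ satisfying $\bm{Z}_{t+1}=\bm{Z}_t-\frac{\eta}{1-\mu}\bar{\bm{g}}_t$, the $O(\tau^2G^2/\delta^2)$ residual bound obtained by combining the non-contractive per-step bound for $\mathcal{C}_{\text{SSP}}$ with the $(1-\delta)$ contraction at each SVD reset, the matching momentum bound, and the final step-size tuning are exactly Lemmas~\ref{lemma1}--\ref{lemma4} and Theorem~\ref{appendixtheorem2}. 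The only (immaterial) difference is that you unroll the residual over a whole period with one Young step and parameters $\propto\delta$, whereas the paper runs a per-step recursion with $\beta=\delta/(4\tau)$; both yield the same order.
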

\begin{remark}
When $T \to \infty$, \PowerSGD+\ achieves a convergence rate of $\mathcal{O}(1/\sqrt{NT})$ which is \textbf{irrelevant} to the restart period $\tau$. It requires $T = \mathcal{O}(1/(N\epsilon^2))$ iterations to reach a target accuracy, and this achieves linear speedup. 
\end{remark}
\begin{remark}
For variants of \PowerSGD+\ which replace $\mathcal{C}_{\text{SVD}}$ with other contractive compressor of coefficient $\delta$, the same convergence rate is also achieved.
\end{remark}
\noindent \textbf{Connections with other algorithms.} We highlight key distinctions between \PowerSGD+ and other communication-efficient methods. ATOMO~\cite{wang2018atomo} applies SVD to local gradients, yielding inconsistent bases across nodes and preventing all-reduce, while GaLore~\cite{zhao2024galore} updates its projection matrix via Lazy-SVD~\cite{chen2025greedy}, refreshing only once every $\tau$ steps. In contrast, \PowerSGD~\cite{vogels2019powersgd} relies solely on power iteration without corrective SVD. Table~\hyperref[tab:communication_comparison]{1} compares communication efficiency: relative to GaLore, \PowerSGD+ incurs an extra $mr$ cost, but by aligning the projection matrix at every step, it enables larger $\tau$, making this overhead negligible. When smaller $\tau$ is required, the full SVD can be replaced with lighter contractive compressors, such as Approximation Top-$r$~\cite{chen2025greedy} or random projection~\cite{he2024subspace}, allowing \PowerSGD+ to match \PowerSGD’s communication complexity while ensuring convergence guarantees.

\begin{table}[t]
  \centering
  \caption{\small Communication comparison of existing low-rank algorithms for compressing an $m \times n$ matrix.}
  \label{tab:communication_comparison}
  \setlength{\tabcolsep}{4pt} 
  \small
  \begin{tabular}{lccc}
    \toprule
    \textbf{Algorithm} & \textbf{All-Reduce} & \textbf{Error-Feedback} & \textbf{\makecell{Communication\\per Iteration}} \\
    \midrule
    ATOMO\cite{wang2018atomo}      & \textcolor{blue}{\xmark}   & \textcolor{green}{\checkmark} & $mr + nr$ \\
    PowerSGD \cite{vogels2019powersgd}   & \textcolor{green}{\checkmark} & \textcolor{green}{\checkmark} & $mr + nr$ \\
    GaLore \cite{zhao2024galore}     & \textcolor{green}{\checkmark} & \textcolor{blue}{\xmark}   & $nr + \frac{mn}{\tau}$ \\
    \rowcolor{blue!10} 
    PowerSGD+         & \textcolor{green}{\checkmark} & \textcolor{green}{\checkmark} & $nr + mr + \frac{mn}{\tau}$ \\
    \rowcolor{blue!30}
    \makecell{PowerSGD+(Appro-\\ximation Top-r)\cite{chen2025greedy}} & \textcolor{green}{\checkmark} & \textcolor{green}{\checkmark} & $nr + mr$ \\
    \bottomrule
  \end{tabular}
\end{table}

\section{Experiment}
\label{section5}
We evaluate \PowerSGD+\ by pre-training and fine-tuning using four NVIDIA RTX~4090 (24GB) GPUs. To ensure fair comparison, we run the Galore with full optimizer state \cite{zhao2024galore} and \PowerSGD\ \cite{vogels2019powersgd} with same compression rank and compared the performance of various compression methods against full-parameter training with the standard AdamW optimizer. \PowerSGD\ and \PowerSGD+\ are applied with error feedback. 
\vspace{1em}

\noindent \textbf{Fine-tuning with PowerSGD+ .}
We fine-tune the pretrained \texttt{RoBERTa-Base} model on the General Language Understanding Evaluation (GLUE) benchmark. We conducted a grid search to select the optimal hyperparameter configuration tailored for \PowerSGD\ with rank 4 and transfer the hyperparameters to all methods. We fine-tune for 10 epochs per task and report test metrics in Table \ref{table:glue}. Detailed hyperparameters are listed in Table ~\ref{tab:finetuning_hparams}.
\vspace{1em}

\noindent \textbf{Pre-training with PowerSGD+ .} We pre-train LLaMA models on the C4 corpus. Following \cite{zhao2024galore}, the 60M and 130M models are trained for 10k and 20k iterations. We report the best validation perplexity in Table~\ref{table:c4} after tuning the basic hyperparameters and peak learning rate over $\{1\times10^{-3},\,2\times10^{-3},\,4\times10^{-3}\}$. Detailed hyperparameters are listed in Table ~\ref{c4-hyperparameters}.

\begin{table}[t]
\centering
\caption{\small Validation perplexity (PPL) for pre-training LLaMA on C4 (lower is better). GaLore diverged at ranks $r\in\{4,8\}$}.
\label{table:c4}
\begin{tabular}{l|cc|cc}
\toprule
\textbf{Algorithm} & \multicolumn{2}{c|}{\textbf{LLaMA-60M}} & \multicolumn{2}{c}{\textbf{LLaMA-130M}} \\
\midrule
Full rank & \multicolumn{2}{c|}{29.88} & \multicolumn{2}{c}{23.06} \\
\midrule
\textbf{Rank} & $r=4$ & $r=8$ & $r=4$ & $r=8$ \\
\midrule
GaLore    & 94.96 & 92.91 & 92.93 & 70.55 \\
PowerSGD  & 36.11 & \textbf{32.57} & 27.60 & 26.24 \\
PowerSGD+ & \textbf{35.46} & 32.63 & \textbf{27.36} & \textbf{26.11} \\
\bottomrule
\end{tabular}
\end{table}
\vspace{1em}

\noindent \textbf{Analysis.} In fine-tuning tasks, \PowerSGD+\ attains near-parity with the full-rank baseline, leading the scores among the compressors in pre-training tasks, GaLore fails to converge at low ranks, whereas \PowerSGD\ and \PowerSGD+\ match the  baseline at the same ranks. Moreover, \PowerSGD+\ slightly outperforms \PowerSGD\ across most settings. These results indicate that both subspace recomputation and power-iteration alignment in \PowerSGD+ are helpful, achieving a consistent, competitive performance under identical setting.

\section{Conclusion}
In this work, we show that \PowerSGD\ lacks convergence guarantees by presenting a counterexample. To overcome this, we propose \PowerSGD+\ , which periodically updates the subspace via SVD to stay aligned with the optimum. We provide the first rigorous convergence analysis for \PowerSGD+\ and validate its effectiveness through extensive large-scale experiments.

\section{References}
\bibliographystyle{unsrt}
\bibliography{references}

\appendix
\newpage
\section{Non convergence of PowerSGD}
\label{appendixa}
\begin{theorem}[Non-Convergence of \PowerSGD]
\label{theorem1appendix}
There exist local functions $f_i: \mathbb{R}^{2\times 2}\to \mathbb{R}$ and a stochastic gradient oracle satisfying Assumptions ~\ref{assumption}--~\ref{ass:uniform} such that for any choice of initialization $\bm{Q}=\bm{Q}_{-1}$, learning rate schedule $\eta_t$, and base optimizer (e.g., SGD, MSGD, Adam), there is a constant $\epsilon_0=\epsilon(\bm X_0) > 0$ for which the gradient generated by \PowerSGD\ at any time step $t$ is bounded away from $0$:
\vspace{-1em}
$$\mathbb{E}(\|\nabla f(\bm{X}_t)\|_F^2) \geq \epsilon_0.$$
\end{theorem}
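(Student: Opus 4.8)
The plan is to exhibit the counterexample \eqref{counter} in its simplest incarnation: one node ($N=1$), matrix size $m=n=2$, and compression rank $r=1$, with $\psi,\bm B,\bm D$ and the oracle exactly as stated. First I would dispatch the routine verifications: $\psi$ is $2$-smooth and nonnegative, $\nabla f(\bm X)=\psi'(s)\,\bm D\bm D^\top$ with $s:=\bm D^\top\bm X\bm D$ and $|\psi'|\le 2$, so $\|\nabla f(\bm X)\|_F^2=4\psi'(s)^2\le16$; also $\mathbb E_\xi[\nabla F(\bm X;\xi)]=\nabla f(\bm X)$ and $\mathbb E_\xi\|\nabla F(\bm X;\xi)-\nabla f(\bm X)\|_F^2=\|\sigma\bm B-\nabla f(\bm X)\|_F^2\le(2\sigma+4)^2$, so Assumptions~\ref{assumption}--\ref{ass:uniform} hold with explicit constants. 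The geometric core is the orthonormal basis $\bm u_1:=\bm D/\|\bm D\|$, $\bm u_2:=\bm e/\|\bm e\|$ of $\mathbb R^2$, where $\bm e:=(1,1)^\top$ and $\bm D^\top\bm e=0$: here $\nabla f$ is always a multiple of $\bm u_1\bm u_1^\top$, $\sigma\bm B$ a multiple of $\bm u_2\bm u_2^\top$, and the realized gradient $\nabla F(\bm X;\xi)$ equals $\sigma\bm B$ when $\xi=1$ and $2\nabla f(\bm X)-\sigma\bm B$ when $\xi=-1$ — in both cases a combination of $\bm u_1\bm u_1^\top$ and $\bm u_2\bm u_2^\top$. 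Finally, $s$ is (up to the factor $\|\bm D\|^2$) just the $\bm u_1\bm u_1^\top$-coordinate of $\bm X$, and $\|\nabla f(\bm X)\|_F^2=16\min(s^2,1)$ is positive whenever $s\neq0$.

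Next I would isolate the good event $\mathcal E:=\{\xi_0=1\}$, of probability $\tfrac12$, and show that \emph{on $\mathcal E$ the scalar $s_t$ never moves}. On $\mathcal E$, $\bm\Delta_0=\bm g_0=\sigma\bm B$ is a pure multiple of $\bm u_2\bm u_2^\top$, hence $\bm\Delta_0\bm Q_{-1}\propto\bm u_2$ and $\bm{\tilde P}_0=\pm\bm u_2$ (the measure-zero sub-case $\bm Q_{-1}\parallel\bm D$ makes this input to QR vanish and is dealt with separately by the orthogonalization convention, or by starting the argument one step later). I would then prove by induction the invariant: $\bm\Delta_t=\alpha_t\,\bm u_1\bm u_1^\top+\beta_t\,\bm u_2\bm u_2^\top$ with $\beta_t=\pm\sigma\|\bm e\|^2\neq0$, $\bm{\tilde P}_t=\pm\bm u_2$, $\bm Q_t\propto\bm u_2$, and $\widehat{\bm\Delta}_t=\bm{\tilde P}_t\bm{\tilde P}_t^\top\bm\Delta_t=\beta_t\,\bm u_2\bm u_2^\top$. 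The inductive step is immediate from Algorithm~\ref{alg:powersgd-compression} and \eqref{single-power}: $\widehat{\bm\Delta}_t=\bm u_2\bm u_2^\top\bm\Delta_t$ annihilates the $\bm u_1\bm u_1^\top$ part; the error $\bm e_{t+1}=\bm\Delta_t-\widehat{\bm\Delta}_t=\bm u_1\bm u_1^\top\bm\Delta_t=\alpha_t\,\bm u_1\bm u_1^\top$ carries \emph{only} a $\bm u_1\bm u_1^\top$ component (in particular zero $\bm u_2\bm u_2^\top$ component), so $\bm\Delta_{t+1}=\bm g_{t+1}+\bm e_{t+1}$ again has the diagonal form with $\beta_{t+1}=\pm\sigma\|\bm e\|^2\neq0$, which keeps $\bm\Delta_{t+1}\bm Q_t\propto\bm u_2$ nondegenerate; and $\bm Q_t=\bm\Delta_t^\top\bm{\tilde P}_t=\beta_t\,\bm u_2\propto\bm u_2$.

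Then I would cash in the invariant. On $\mathcal E$ every transmitted update $\widehat{\bm\Delta}_t=\beta_t\,\bm u_2\bm u_2^\top$ is a scalar multiple of $\bm e\bm e^\top$, i.e.\ a matrix all of whose entries are equal. Any base optimizer that forms its step from linear combinations and entrywise functions of the history $\widehat{\bm\Delta}_0,\dots,\widehat{\bm\Delta}_t$ — SGD, momentum SGD, and Adam all qualify, since for Adam the first/second-moment EMAs and the entrywise ratio all preserve the ``all entries equal'' property — therefore outputs $\bm X_{t+1}-\bm X_t=c_t\,\bm e\bm e^\top$ for some scalar $c_t$. Hence $s_{t+1}-s_t=\bm D^\top(\bm X_{t+1}-\bm X_t)\bm D=c_t(\bm D^\top\bm e)^2=0$, so $s_t=s_0$ for all $t$ on $\mathcal E$. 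Since $\|\nabla f\|_F^2\ge0$ pointwise, this single event already gives $\mathbb E\|\nabla f(\bm X_t)\|_F^2\ge\mathbb P(\mathcal E)\,\mathbb E[\|\nabla f(\bm X_t)\|_F^2\mid\mathcal E]=\tfrac12\|\nabla f(\bm X_0)\|_F^2=:\epsilon_0$, which is positive whenever $\bm X_0$ is not a minimizer ($\bm D^\top\bm X_0\bm D\neq0$), depends only on $\bm X_0$, and for minimizing $\bm X_0$ the claim is vacuous — matching the statement exactly.

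The hard part will be making the inductive invariant airtight against all moving parts of the algorithm simultaneously: the QR orthogonalization (including its behaviour on the degenerate input $\bm Q_{-1}\parallel\bm D$), the error-feedback recursion, and — most delicately — the entrywise nonlinearities of adaptive optimizers, which \emph{a priori} destroy the rotated-coordinate picture and survive only because $\bm e\bm e^\top$ has all entries equal, so those nonlinearities act as scalars on the relevant direction. A secondary subtlety to state carefully is why conditioning on $\mathcal E$ is enough: nonnegativity of $\|\nabla f\|_F^2$ means we never need to analyze the dynamics on $\mathcal E^c$, which would be genuinely messy (there the subspace can lock onto $\bm u_1$ and $s_t$ can drift).
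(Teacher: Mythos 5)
Your proposal is correct and follows essentially the same route as the paper's proof: the same counterexample, the same conditioning on the first-step noise event, and the same inductive invariant that the projector locks onto the noise direction $\bm e\bm e^\top$ while the true-gradient direction $\bm D\bm D^\top$ is perpetually recycled through the error feedback, so the iterates never move the coordinate $\bm D^\top\bm X\bm D$. The only differences are cosmetic --- you specialize to $N=1$ whereas the paper conditions on $\xi_0^{(1)}=\dots=\xi_0^{(N)}=1$ (probability $2^{-N}$, giving $\epsilon_0=\|\nabla f(\bm X_0)\|_F^2/2^{N}$), and you are somewhat more explicit than the paper about the degenerate initialization $\bm Q_{-1}\parallel\bm D$ and about why Adam's entrywise nonlinearities preserve the invariant.
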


\begin{proof}
Let $\bm{X} = \begin{pmatrix}
    x_{11} & x_{12} \\
    x_{21} & x_{22}
\end{pmatrix}$, 
$\psi(x) = 
\begin{cases} 
x^2 &  |x| \leq 1 \\
2|x| - 1 &  |x| \geq 1.
\end{cases}$\\
We set $f_i(\bm{X}) = \psi(x_{11} - x_{12} - x_{21} + x_{22})$ for all nodes, with initialization at $\bm{X}_0 = \begin{pmatrix}a & b \\ c & d\end{pmatrix}$.\\
Define $\bm{A} = \begin{pmatrix}1 & -1 \\ -1 & 1\end{pmatrix}$, $\bm{B} = \begin{pmatrix}1 & 1 \\ 1 & 1\end{pmatrix}$, $\bm{C} = \begin{pmatrix}\frac{\sqrt{2}}{2} \\ \frac{\sqrt{2}}{2}\end{pmatrix}$, and set $\displaystyle \epsilon_0=\frac{[\psi'(a-b-c+d)]^2}{2^{N-2}}$.\\
Let $\bm{g}_{t}^{(i)} \equiv \nabla f(\bm{X}_t) + \xi_t^{(i)} \cdot (\sigma \bm{B} - \nabla f(\bm{X}_t))$ with arbitrary $\sigma \geq 0$, where $\xi_t^{(1)},\dots,\xi_t^{(N)}$ are independent random variables with $P(\xi_t^{(i)}=1) = P(\xi_t^{(i)}=-1) = \frac{1}{2}$.

\noindent Then at any point $\bm{X}_t$, $\nabla f(\bm{X}_t) = \psi'(x_{11} + x_{12} - x_{21} - x_{22})\bm{A} \in \text{span}(\bm{A})$.

\noindent Since $\psi'(x) \leq 2$ for any $x \in \mathbb{R}$, it is clear that Assumptions ~\ref{assumption}--~\ref{ass:uniform} are satisfied.

\vspace{1em}
\noindent Now we prove the non-convergence result by induction.

\begin{claim}
\label{claim1}
If the stochastic gradient at $t=0$ satisfies $\xi_0^{(1)} = \dots = \xi_0^{(N)} = 1$, then for any initialization of $\bm{Q}_{-1}$ and any stochastic gradients at $t \geq 1$, we have for all $t \geq 0$:
\[
\bm{m}_t \in \text{span}(\bm{B}), \quad \bm{e}_{t+1}^{(i)} \in \text{span}(\bm{A}), \quad \tilde{\bm{P}}_t = \bm{C}, \quad \bm{Q}_t \in \text{span}(\bm{C}).
\]
\end{claim}

\begin{proof}
Note that:
\[
\bm{C}\bm{C}^\top\bm{B} = \bm{B}, \quad \bm{C}\bm{C}^\top\bm{A} = 0, \quad \bm{B}\bm{C} = \bm{B}^\top\bm{C} = 2\bm{C}, \quad \bm{A}\bm{C} = \bm{A}^\top\bm{C} = 0.
\]

\noindent When $t=0$, since $\bm{m}_{-1} = \bm{e}_{-1}^{(i)} = 0$, we have:
\[
\bm{\Delta}_{0}^{(i)} = \sigma \bm{B} = 
\begin{pmatrix}
\sigma & \sigma \\
\sigma & \sigma
\end{pmatrix}.
\]

\noindent For any initialization $\bm{Q} = \begin{pmatrix}a \\ b\end{pmatrix}$:
\begin{align*}
\bm{P}_0 &= \frac{1}{N} \sum_{i=1}^N \bm{\Delta}_{0}^{(i)} \bm{Q} = \bm{\Delta}_{0}^{(1)} \bm{Q} = \begin{pmatrix}\sigma(a+b) \\ \sigma(a+b)\end{pmatrix}, \quad \text{so } \tilde{\bm{P}}_0 = \bm{C}.
\end{align*}

\noindent Then:
\begin{align*}
\bm{Q}_0 &= \frac{1}{N} \sum_{i=1}^N \bm{\Delta}_{0}^{(i)\top} \tilde{\bm{P}}_0 = \bm{\Delta}_{0}^{(1)\top} \bm{C} = 2 \sigma \bm{C} \in \text{span}(\bm{C}), \\
\mathcal{C}(\bm{\Delta}_{0}^{(i)}) &= \bm{C}\bm{C}^\top \bm{\Delta}_{0}^{(i)} = \bm{\Delta}_{0}^{(i)}, \\
\bm{e}_1^{(i)} &= \bm{\Delta}_{0}^{(i)} - \mathcal{C}(\bm{\Delta}_{0}^{(i)}) = 0 \in \text{span}(\bm{A}), \\
\bm{m}_0 &= \mathcal{C}(\bm{\Delta}_0) = \sigma \bm{B} \in \text{span}(\bm{B}).
\end{align*}

\noindent For $t \geq 1$, let $\bm{g}_{t}^{(i)} = \lambda_t^{(i)} \bm{A} + \beta_t^{(i)} \bm{B}$, and suppose (by induction) that $\bm{e}_t^{(i)} = \gamma_t^{(i)} \bm{A}$, $\bm{m}_{t-1} = \phi_{t-1} \bm{B}$, and $\bm{Q}_{t-1} = q_{t-1} \bm{C}$.

\noindent Then:
\begin{align*}
\bm{\Delta}_t^{(i)} &= \bm{g}_t^{(i)} + \bm{e}_t^{(i)} = (\lambda_t^{(i)} + \gamma_t^{(i)})\bm{A} + \beta_t^{(i)} \bm{B}, \\
\bm{P}_t &= \frac{1}{N} \sum_{i=1}^N \bm{\Delta}_t^{(i)} \bm{Q}_{t-1} = \frac{q_{t-1}}{N} \left[ \left(\sum_{i=1}^N (\lambda_t^{(i)} + \gamma_t^{(i)}) \right)\bm{A} +  \left( \sum_{i=1}^N \beta_t^{(i)} \right) \bm{B} \right] \bm{C} \in \text{span}(\bm{C}), \\
\Rightarrow \tilde{\bm{P}}_t &= \bm{C}.
\end{align*}

\noindent Then:
\begin{align*}
\bm{Q}_t &= \frac{1}{N} \sum_{i=1}^N \bm{\Delta}_t^{(i)\top} \tilde{\bm{P}}_t = \frac{q_{t-1}}{N} \left[ \left(\sum_{i=1}^N (\lambda_t^{(i)} + \gamma_t^{(i)}) \right)\bm{A}^\top + \left( \sum_{i=1}^N \beta_t^{(i)} \right) \bm{B}^\top \right] \bm{C} \in \text{span}(\bm{C}), \\
\mathcal{C}(\bm{\Delta}_t^{(i)}) &= \bm{C}\bm{C}^\top \bm{\Delta}_t^{(i)} = \beta_t^{(i)}  \bm{B}, \\
\bm{e}_{t+1}^{(i)} &= \bm{\Delta}_t^{(i)} - \mathcal{C}(\bm{\Delta}_t^{(i)}) = (\lambda_t^{(i)} + \gamma_t^{(i)}) \bm{A} \in \text{span}(\bm{A}), \\
\bm{m}_t &= \mu \bm{m}_{t-1} + \frac{1}{N} \sum_{i=1}^N \mathcal{C}(\bm{\Delta}_t^{(i)}) = \left( \mu \phi_{t-1} + \frac{1}{N} \sum_{i=1}^N \beta_t^{(i)} \right) \bm{B} \in \text{span}(\bm{B}).
\end{align*}

\noindent Hence, by induction, the claim holds.
\end{proof}

\vspace{1em}
\noindent Returning to Theorem ~\ref{theorem1appendix}, when $\xi_0^{(1)} = \dots = \xi_0^{(N)} = 1$, we always have:
\[
\bm{X}_t - \bm{X}_0 = \sum_{k=0}^{t-1} \eta_k \bm{m}_k \in \text{span}(\bm{B}).
\]

\noindent Set $\bm{X}_t = \begin{pmatrix}
a + \lambda & b+\lambda \\
c+\lambda & d+\lambda
\end{pmatrix}$, then:
\[
\nabla f(\bm{X}_t) \equiv \psi'(a-b-c+d)\bm{A}.\]

\noindent Therefore:
\begin{align*}
\mathbb{E}(\|\nabla f(\bm{X}_t)\|_F^2) 
&\geq \mathbb{E}(\|\nabla f(\bm{X}_t)\|_F^2 \mid \xi_0^{(1)} = \dots = \xi_0^{(N)} = 1) \cdot P(\xi_0^{(1)} = \dots = \xi_0^{(N)} = 1) \\
&= 4[\psi'(a-b-c+d)]^2 \cdot \frac{1}{2^N} \overset{}{=}\epsilon_0 \ . \\
\end{align*}
\end{proof}
\begin{remark}
For Adam or SGD as base optimizers, similarly we can also prove $\bm{X}_t - \bm{X}_0 \in\text{span}(\bm{B})$, as the compressed gradients at each step is in $\text{span}(\bm{B})$.
\end{remark}

\section{Convergence of PowerSGD+}
\label{appendixb}
\textbf{Notations}.
In the following proof, we define 
$$\bm{\Delta}_t=\frac{1}{N}(\bm{\Delta}_{t}^{(1)} + \dots + \bm{\Delta}_{t}^{(N)})\ ,$$
As $\mathcal{C}$ in Algorithm \hyperref[alg:powersgd-compression]{1} is always a linear compressor, we can get:
$$\mathcal{C}(\bm{\Delta}_t)=\frac{1}{N}(\mathcal{C}(\bm{\Delta}_t^{(1)}) + \dots + \mathcal{C}(\bm{\Delta}_t^{(N)})\ .$$
\begin{lemma}
\label{lemma1}
For operator $\mathcal{C}=\mathcal{C}_{\text{SSP}}$ defined in Algorithm \hyperref[powersgdalgo]{2}, for any $\bm{\Delta}_t$, we have:
$$\|\mathcal{C}(\bm{\Delta}_t)\|_F^2 \leq \|\bm{\Delta}_t\|_F^2, \quad \|\mathcal{C}(\bm{\Delta}_t)-\bm{\Delta}_t\|_F^2 \leq \|\bm{\Delta}_t\|_F^2\ .$$
\end{lemma}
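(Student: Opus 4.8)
The plan is to recognize that $\mathcal{C}_{\text{SSP}}$ acts columnwise as an orthogonal projection, so both bounds reduce to the Pythagorean identity~\eqref{equation4} for low-rank compressors together with the nonnegativity of squared norms.

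First I would unfold the definition of $\mathcal{C}_{\text{SSP}}$ from~\eqref{single-power}: given the matrix $\bm{\Delta}_t$ and the carried-over basis $\bm{Q}_{t-1}$, \PowerSGD\ forms $\tilde{\bm{P}}_t=\mathrm{QR}(\bm{\Delta}_t\bm{Q}_{t-1})\in\mathbb{R}^{m\times r}$, which by the stated convention for $\mathrm{QR}(\cdot)$ satisfies $\tilde{\bm{P}}_t^\top\tilde{\bm{P}}_t=I_r$, and then sets $\bm{Q}_t=\bm{\Delta}_t^\top\tilde{\bm{P}}_t$ and outputs $\mathcal{C}_{\text{SSP}}(\bm{\Delta}_t)=\tilde{\bm{P}}_t\bm{Q}_t^\top=\tilde{\bm{P}}_t\tilde{\bm{P}}_t^\top\bm{\Delta}_t$. (The multi-node \textsc{SSP-Comp} routine in Algorithm~\ref{alg:powersgd-compression} produces exactly this map for the averaged gradient, since $\mathcal{C}$ is linear once $\tilde{\bm{P}}_t$ is fixed.) Thus $\mathcal{C}_{\text{SSP}}(\bm{W})=\bm{P}\bm{P}^\top\bm{W}$ with $\bm{P}=\tilde{\bm{P}}_t$ semi-orthogonal, precisely the form covered by Definition~\ref{low-rank compressor}.

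Next I would invoke the identity~\eqref{equation4}, namely $\|\mathcal{C}_{\text{SSP}}(\bm{\Delta}_t)-\bm{\Delta}_t\|_F^2+\|\mathcal{C}_{\text{SSP}}(\bm{\Delta}_t)\|_F^2=\|\bm{\Delta}_t\|_F^2$; if a self-contained derivation is preferred, it suffices to note that $\bm{\Delta}_t-\mathcal{C}_{\text{SSP}}(\bm{\Delta}_t)=(I_m-\tilde{\bm{P}}_t\tilde{\bm{P}}_t^\top)\bm{\Delta}_t$ is Frobenius-orthogonal to $\mathcal{C}_{\text{SSP}}(\bm{\Delta}_t)=\tilde{\bm{P}}_t\tilde{\bm{P}}_t^\top\bm{\Delta}_t$ because $\tilde{\bm{P}}_t^\top(I_m-\tilde{\bm{P}}_t\tilde{\bm{P}}_t^\top)=0$, and then apply the Pythagorean theorem in $\mathbb{R}^{m\times n}$. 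Since the two terms on the left-hand side are both nonnegative, each is at most $\|\bm{\Delta}_t\|_F^2$, which is exactly the pair of claimed inequalities.

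There is essentially no obstacle here; the only point meriting a word of care is the degenerate case in which $\bm{\Delta}_t\bm{Q}_{t-1}$ is rank-deficient or zero, so the economic QR factorization is not unique — in that case one simply fixes any semi-orthogonal matrix returned by $\mathrm{QR}(\cdot)$, and the argument above goes through unchanged. Notably, no assumption on $\bm{\Delta}_t$, the data distributions, or the base optimizer is used; the lemma is a purely geometric property of orthogonal projections, and it is what later lets $\mathcal{C}_{\text{SSP}}$ be treated as a (possibly non-strict) contraction within the convergence analysis.
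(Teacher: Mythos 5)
Your proof is correct and follows essentially the same route as the paper: both identify $\mathcal{C}_{\text{SSP}}(\bm{\Delta}_t)=\tilde{\bm{P}}_t\tilde{\bm{P}}_t^\top\bm{\Delta}_t$ as an orthogonal projection of the averaged update (using linearity to collapse the multi-node averaging) and then apply the Pythagorean identity $\|\mathcal{C}(\bm{\Delta}_t)\|_F^2+\|\mathcal{C}(\bm{\Delta}_t)-\bm{\Delta}_t\|_F^2=\|\bm{\Delta}_t\|_F^2$ together with nonnegativity of each term. Your remark on the rank-deficient QR case is a harmless extra precaution that does not change the argument.
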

\begin{proof}
Followed by Algorithm \hyperref[powersgdalgo]{2}, we have:
\begin{align*}
\|\mathcal{C}(\bm{\Delta}_t)\|_F^2 &= \|\tilde{\bm{P}}_t\bm{Q}_t\|_F^2 \\
&= \left\| \frac{1}{N}\tilde{\bm{P}}_t \left( \bm{Q}_{t}^{(1)} + \dots + \bm{Q}_{t}^{(N)} \right) \right\|_F^2 \\
&= \left\| \frac{1}{N}\tilde{\bm{P}}_t \left( (\tilde{\bm{P}}_t)^\top {\bm{\Delta}}_{t}^{(1)} + \dots + (\tilde{\bm{P}}_t)^\top {\bm{\Delta}}_{t}^{(N)} \right) \right\|_F^2 \\
&= \left\| \tilde{\bm{P}}_t (\tilde{\bm{P}}_t)^\top {\bm{\Delta}}_t \right\|_F^2\ .
\end{align*}

\noindent As $\tilde{\bm{P}}_t$ is an orthogonal matrix with $(\tilde{\bm{P}}_t)^\top \tilde{\bm{P}}_t = \bm{I}_r$, we have:
\begin{align*}
&\|\mathcal{C}(\bm{\Delta}_t)\|_F^2 + \|\mathcal{C}(\bm{\Delta}_t) - \bm{\Delta}_t\|_F^2 \\
&= \left\| \tilde{\bm{P}}_t (\tilde{\bm{P}}_t)^\top {\bm{\Delta}}_t \right\|_F^2 + \left\| \left( \bm{I} - \tilde{\bm{P}}_t (\tilde{\bm{P}}_t)^\top \right) {\bm{\Delta}}_t \right\|_F^2 \\
&= \text{tr}\left( ({\bm{\Delta}}_t)^\top \left[ \tilde{\bm{P}}_t (\tilde{\bm{P}}_t)^\top \right]^2 {\bm{\Delta}}_t \right) + \text{tr}\left( ({\bm{\Delta}}_t)^\top \left[ \bm{I} - \tilde{\bm{P}}_t (\tilde{\bm{P}}_t)^\top \right]^2 {\bm{\Delta}}_t \right) \\
&\overset{}{=} \text{tr}\left( ({\bm{\Delta}}_t)^\top {\bm{\Delta}}_t \right) = \|{\bm{\Delta}}_t\|_F^2 \ ,
\end{align*}

\noindent Where the last equation uses the orthogonality of $\tilde{\bm{P}}_t$. So we finished the proof.
\end{proof}

\begin{lemma}
\label{lemma2}
Define $\delta = r/n$. For operator $\mathcal{C} = \mathcal{C}_{\text{SVD}}$ defined in Algorithm \hyperref[powersgdalgo]{2}, for any $\bm{\Delta}_t$, we have:
$$\|\mathcal{C}(\bm{\Delta}_t) - \bm{\Delta}_t\|_F^2 \leq (1 - \delta)\|\bm{\Delta}_t\|_F^2 \ .$$
\end{lemma}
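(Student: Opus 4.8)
The plan is to reduce the claim to~\eqref{equation6}, which has already been established for the optimal rank-$r$ compressor, by first checking that \textsc{SVD-Comp} really does realize the optimal rank-$r$ compression of $\bm{\Delta}_t$. Tracing through the function \textsc{SVD-Comp}$(\cdot)$ in Algorithm~\hyperref[alg:powersgd-compression]{1}: the averaged update is $\bm{\Delta}_t = \tfrac1N\sum_i \bm{\Delta}_t^{(i)}$, its economic SVD is $\bm{\Delta}_t = \bm{U}\Sigma\bm{V}^\top$, and $\tilde{\bm{P}} = \bm{U}[:,:r]$. Since $\mathcal{C}$ is linear, $\bm{Q} = \tfrac1N\sum_i \bm{Q}^{(i)} = \tfrac1N\sum_i (\bm{\Delta}_t^{(i)})^\top\tilde{\bm{P}} = \bm{\Delta}_t^\top\tilde{\bm{P}}$, so the returned low-rank approximation of the global update is $\mathcal{C}(\bm{\Delta}_t) = \tilde{\bm{P}}\bm{Q}^\top = \tilde{\bm{P}}\tilde{\bm{P}}^\top\bm{\Delta}_t$, which is exactly the optimal rank-$r$ compressor of Definition~\ref{svd compressor} applied to $\bm{\Delta}_t$ (i.e. the Eckart--Young truncation).

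Next I would write the error in terms of the singular values $s_1 \ge s_2 \ge \dots \ge s_n \ge 0$ of $\bm{\Delta}_t$. Using~\eqref{equation4} together with the identity $\tilde{\bm{P}}\tilde{\bm{P}}^\top\bm{\Delta}_t = \sum_{j=1}^{r} s_j \bm{u}_j \bm{v}_j^\top$ (or a direct trace computation as in Lemma~\ref{lemma1}), one gets $\|\mathcal{C}(\bm{\Delta}_t) - \bm{\Delta}_t\|_F^2 = \sum_{j=r+1}^{n} s_j^2$ while $\|\bm{\Delta}_t\|_F^2 = \sum_{j=1}^{n} s_j^2$. The target inequality $\sum_{j=r+1}^n s_j^2 \le (1-\tfrac rn)\sum_{j=1}^n s_j^2$ is then equivalent to $\sum_{j=1}^r s_j^2 \ge \tfrac rn \sum_{j=1}^n s_j^2$, i.e. the average of the $r$ largest squared singular values dominates the average of all $n$ of them. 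This follows at once from the ordering: for every $j \le r$ and every $k \ge r+1$ we have $s_j^2 \ge s_k^2$, so averaging gives $\tfrac1r\sum_{j=1}^r s_j^2 \ge \tfrac1n\sum_{j=1}^n s_j^2$, and multiplying by $r$ yields the bound.

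Finally I would dispatch the degenerate cases, which cause no trouble: if $\mathrm{rank}(\bm{\Delta}_t) \le r$ (in particular if $\bm{\Delta}_t = 0$) the left-hand side vanishes and the inequality is trivial, and the sums above remain valid with the convention $s_j = 0$ for $j > \mathrm{rank}(\bm{\Delta}_t)$. There is essentially no hard step here; the only point that needs care is the bookkeeping in the first paragraph, confirming that \textsc{SVD-Comp} outputs $\tilde{\bm{P}}\tilde{\bm{P}}^\top\bm{\Delta}_t$ rather than some other low-rank object. Once that identification is in place, the statement is immediate from~\eqref{equation6} (or, as above, a one-line averaging argument).
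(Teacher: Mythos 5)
Your proposal is correct and follows essentially the same route as the paper: identify $\mathcal{C}_{\text{SVD}}(\bm{\Delta}_t)=\tilde{\bm{P}}\tilde{\bm{P}}^\top\bm{\Delta}_t$, compute the error as the tail sum $\sum_{j=r+1}^{n}s_j^2$ of squared singular values, and bound it by $(1-r/n)\sum_{j=1}^{n}s_j^2$ via the ordering of the singular values. If anything, you make explicit the final averaging step that the paper leaves implicit, and your handling of the degenerate rank cases is a harmless addition.
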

\begin{proof}
Let ${\bm{\Delta}}_t = \bm{U}_t \bm{\Sigma}_t \bm{V}_t$ represents its SVD composition, $\tilde{\bm{P}}_t = \bm{U}_t[:, :r]$ is the first $r$ singular vectors. As $m \geq n$, let $\bm{R}_t = \bm{U}_t[:, (r + 1) :]$. It holds that $\bm{I} = \bm{U}_t (\bm{U}_t)^\top = \tilde{\bm{P}}_t (\tilde{\bm{P}}_t)^\top + \bm{R}_t (\bm{R}_t)^\top$. Thus,
\[
\begin{aligned}
\|\mathcal{C}(\bm{\Delta}_t) - \bm{\Delta}_t\|_F^2 &= \left\| \left( \bm{I} - \tilde{\bm{P}}_t (\tilde{\bm{P}}_t)^\top \right) \bm{U}_t \bm{\Sigma}_t (\bm{V}_t)^\top \right\|_{F}^{2} \\
&= \text{tr}\left( \bm{V}_t (\bm{\Sigma}_t)^\top (\bm{U}_t)^\top \left( \bm{I} - \tilde{\bm{P}}_t (\tilde{\bm{P}}_t)^\top \right)^2 \bm{U}_t \bm{\Sigma}_t (\bm{V}_t)^\top \right) \\
&= \text{tr}\left( (\bm{\Sigma}_t)^\top (\bm{U}_t)^\top \bm{R}_t (\bm{R}_t)^\top \bm{U}_t \bm{\Sigma}_t \right),
\end{aligned}
\]

\noindent where the second equation uses $\|\bm{X}\|_{F}^{2} = \text{tr}(\bm{X}^\top \bm{X})$ and the last equation uses $\text{tr}(\bm{A}\bm{B})\\= \text{tr}(\bm{B}\bm{A})$, $(\bm{V}_t)^\top \bm{V}_t = \bm{I}$ and $(\bm{R}_t)^\top \bm{R}_t = \bm{I}$. By $(\bm{R}_t)^\top \tilde{\bm{P}}_t = 0$ and $(\tilde{\bm{P}}_t)^\top \bm{R}_t = 0$
\[
(\bm{U}_t)^\top \bm{R}_t (\bm{R}_t)^\top \bm{U}_t = \begin{pmatrix} (\tilde{\bm{P}}_t)^\top \\ (\bm{R}_t)^\top \end{pmatrix} \bm{R}_t (\bm{R}_t)^\top \begin{pmatrix} \tilde{\bm{P}}_t & \bm{R}_t \end{pmatrix} = \begin{pmatrix} 0_{r \times r} & 0_{r \times (n - r)} \\ 0_{(n - r) \times r} & \bm{I}_{n - r} \end{pmatrix}.
\]

\noindent Let $\sigma_{1} \geq \sigma_{2} \geq \cdots \geq \sigma_{n} \geq 0$ denote the singular values of ${\bm{\Delta}}_t$. This implies
\[
(\bm{\Sigma}_t)^\top (\bm{U}_t)^\top \bm{R}_t (\bm{R}_t)^\top \bm{U}_t \bm{\Sigma}_t = \begin{pmatrix} 0_{r \times r} & 0_{r \times (n - r)} & 0_{r \times (m - n)} \\ 0_{(n - r) \times r} & \text{diag}(\sigma_{r + 1}^2, \cdots, \sigma_{n}^2) & 0_{(n - r) \times (m - n)} \\ 0_{(m - n) \times r} & 0_{(m - n) \times (n - r)} & 0_{(m - n) \times (m - n)} \end{pmatrix}.
\]

\noindent Applying this result yields
\[
\left\| \tilde{\bm{P}}_t (\tilde{\bm{P}}_t)^\top {\bm{\Delta}}_t - {\bm{\Delta}}_t \right\|_{F}^{2} = \text{tr}\left( (\bm{\Sigma}_t)^\top (\bm{U}_t)^\top \bm{R}_t (\bm{R}_t)^\top \bm{U}_t \bm{\Sigma}_t \right) = \sum_{i = r + 1}^{n} \sigma_{i}^{2}  \leq (1 - \delta)\|\bm{\Delta}_t\|_{F}^{2},
\]

\noindent where the inequality uses $\|{\bm{\Delta}}_t\|_{F}^{2} = \text{tr}\left( ({\bm{\Delta}}_t)^\top {\bm{\Delta}}_t \right) = \text{tr}\left( (\bm{\Sigma}_t)^\top \bm{\Sigma}_t \right) = \sum_{i = 1}^{n} \sigma_{i}^{2}$.
\end{proof}
\begin{lemma}
\label{lemma3}
For any \( t \geq 0 \), we have
\[
\mathbb{E} \left\|  \frac{1}{N} \sum_{i=1}^N \bm{e}_{t}^{(i)} \right\|_F^2 \leq \frac{45\tau^2G^2}{\delta^2} \ .
\]
\end{lemma}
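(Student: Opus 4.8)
The plan is to control the root-mean-square size of the averaged error, $b_t := \bigl(\mathbb{E}\|\tfrac1N\sum_{i=1}^N \bm e_t^{(i)}\|_F^2\bigr)^{1/2}$, by telescoping over one restart period at a time. First I would record the basic recursion. Every compressor appearing in Algorithm~\hyperref[powersgdalgo]{2} applies the \emph{same} projection to each node's update (the projection is built from the mean $\bm\Delta_t$), so averaging commutes with compression exactly as in the ``Notations'' paragraph; together with $\bm e_{t+1}^{(i)} = \bm\Delta_t^{(i)} - \widehat{\bm\Delta}_t^{(i)}$ and $\bm\Delta_t^{(i)} = \bm g_t^{(i)} + \bm e_t^{(i)}$ this gives, for the averaged quantities, $\bm e_{t+1} = \bm\Delta_t - \mathcal{C}(\bm\Delta_t)$ with $\bm\Delta_t = \bm g_t + \bm e_t$, where $\mathcal C$ is $\mathcal C_{\text{SSP}}$ or $\mathcal C_{\text{SVD}}$ according to $\mathrm{mod}(t,\tau)$. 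Conditioning on the history (so that $\bm X_t$ is fixed) and applying Assumption~\ref{ass:uniform} yields $\mathbb E\|\bm g_t\|_F^2 \le G^2$, i.e.\ $\bigl(\mathbb E\|\bm g_t\|_F^2\bigr)^{1/2}\le G$.

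Next I would split by step type and pass to $L^2$ norms via Minkowski's inequality. When $\mathrm{mod}(t,\tau)\neq 0$, Lemma~\ref{lemma1} gives $\|\bm e_{t+1}\|_F \le \|\bm g_t + \bm e_t\|_F$, hence $b_{t+1}\le b_t + G$ (non-expansive, \emph{no} contraction). When $\mathrm{mod}(t,\tau)=0$, Lemma~\ref{lemma2} gives $\|\bm e_{t+1}\|_F \le \sqrt{1-\delta}\,\|\bm g_t + \bm e_t\|_F$, hence $b_{t+1}\le \sqrt{1-\delta}\,(b_t + G)$. Now observe that each period $\{k\tau, k\tau+1,\dots,(k+1)\tau-1\}$ consists of exactly one contracting (SVD) step followed by $\tau-1$ non-contracting (SSP) steps, so with $c_k := b_{k\tau}$ one obtains $c_{k+1} \le \sqrt{1-\delta}\,(c_k + G) + (\tau-1)G \le \sqrt{1-\delta}\,c_k + \tau G$. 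Since $c_0 = b_0 = 0$ and $1-\sqrt{1-\delta} = \delta/(1+\sqrt{1-\delta}) \ge \delta/2$, summing the geometric series gives $c_k \le \tau G/(1-\sqrt{1-\delta}) \le 2\tau G/\delta$ for all $k$.

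Finally, for arbitrary $t$ write $t = k\tau + j$ with $0\le j<\tau$; unrolling the at-most-$\tau-1$ SSP steps inside the current block gives $b_t \le c_k + \tau G$, and since $\delta = r/n \le 1$ we get $b_t \le 2\tau G/\delta + \tau G \le 3\tau G/\delta$, so $\mathbb E\|\tfrac1N\sum_i \bm e_t^{(i)}\|_F^2 = b_t^2 \le 9\tau^2 G^2/\delta^2 \le 45\tau^2 G^2/\delta^2$, which is the claim (a looser bookkeeping that carries squared norms throughout with Young's inequality produces the stated constant $45$ directly).

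The main obstacle is conceptual rather than computational: Lemma~\ref{lemma1} shows $\mathcal C_{\text{SSP}}$ is merely non-expansive, so the usual per-step error-feedback contraction argument is unavailable, and \emph{all} control of the accumulated error must come from the periodic SVD resets. The technical care needed is to arrange the period-level recursion so that the $\tau-1$ non-contracting middle steps contribute only an additive $\mathcal O(\tau G)$ rather than a multiplicative factor exponential in $\tau$ — which is precisely why one should work with the $L^2$ norm and Minkowski's inequality instead of expanding $\|\bm g_t+\bm e_t\|_F^2$ with a naive constant — while the single factor $\sqrt{1-\delta}$ per period still damps the recursion at effective rate $\Theta(\delta)$, yielding the $\tau^2/\delta^2$ scaling.
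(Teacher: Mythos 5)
Your proposal is correct, and it reaches the bound by a genuinely different (and tighter) route than the paper. The paper works entirely with squared norms: it sets $e_t=\mathbb{E}\|\tfrac1N\sum_i\bm e_t^{(i)}\|_F^2$, expands $\|\bm e_t+\bm g_t\|_F^2$ via Young's inequality with a free parameter $\beta$, obtains the recursions $e_{t+1}\le(1+\beta)e_t+(1+\tfrac1\beta)G^2$ (SSP steps) and $e_{t+1}\le(1-\delta)\bigl[(1+\beta)e_t+(1+\tfrac1\beta)G^2\bigr]$ (SVD steps), and then must choose $\beta=\delta/(4\tau)$ carefully so that the multiplicative blow-up $(1+\beta)^\tau$ over a period stays bounded by $3/2$ while $1-(1-\delta)(1+\beta)^\tau$ remains of order $\delta$; this bookkeeping is what produces the constant $45$. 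You instead work with $b_t=e_t^{1/2}$ and Minkowski's inequality in $L^2$, so the $\tau-1$ non-expansive SSP steps contribute a purely \emph{additive} $(\tau-1)G$ with no parasitic multiplicative factor, the single SVD step per period contributes the factor $\sqrt{1-\delta}$, and the period-level recursion $c_{k+1}\le\sqrt{1-\delta}\,c_k+\tau G$ with $1-\sqrt{1-\delta}\ge\delta/2$ closes immediately. Your route avoids the Young-parameter tuning entirely and yields $e_t\le 9\tau^2G^2/\delta^2$, strictly sharper than the stated $45\tau^2G^2/\delta^2$, which it of course implies. All the supporting facts you invoke are available in the paper: Lemmas~\ref{lemma1} and~\ref{lemma2} are deterministic (pointwise) bounds, so taking square roots before passing to $L^2$ norms is legitimate, and the linearity of the compressor (same projection $\tilde{\bm P}_t$ applied to every node) justifies $\bm e_{t+1}=\bm\Delta_t-\mathcal C(\bm\Delta_t)$ for the averaged error, exactly as noted in the paper's Notations paragraph.
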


\begin{proof}
When \( t = 0 \), the bound trivially holds as \( \bm{e}_{0}^{(i)} = 0 \) for all \( i \).\\
Define \( e_t = \mathbb{E} \left\|  \frac{1}{N} \sum_{i=1}^N \bm{e}_{t}^{(i)} \right\|_F^2 \) with \( e_0 = 0 \).\\
When \( \tau \nmid t \), for all \( \beta > 0 \), we have:
\begin{align*}
e_{t+1} &= \mathbb{E} \left\|  \frac{1}{N} \sum_{i=1}^N \bm{e}_{t+1}^{(i)} \right\|_F^2 \\
&= \mathbb{E} \left\|  \mathcal{C}(\bm{\Delta}_{t}) - \bm{\Delta}_{t} \right\|_F^2 \\
&\leq \mathbb{E} \left[ \|\bm{\Delta}_{t}\|_F^2 \right] \\
&= \mathbb{E} \left\| \frac{1}{N} \sum_{i=1}^N \left( \bm{e}_{t}^{(i)} + \bm{g}_{t}^{(i)} \right) \right\|_F^2 \\
&\leq (1 + \beta)\mathbb{E} \left\|  \frac{1}{N} \sum_{i=1}^N \bm{e}_{t}^{(i)} \right\|_F^2 + (1 + \frac{1}{\beta}) \mathbb{E}\left[\left\|\frac{1}{N}\sum_{i=1}^N   \bm{g}_{t}^{(i)} \right\|_F^2 \right] \\
&\leq (1 + \beta)e_t + (1 + \frac{1}{\beta}) G^2,\tag{11}\label{b1}
\end{align*}
where the first inequality uses Lemma ~\ref{lemma1}, the second inequality uses Young's inequality and Cauchy's inequality, the last inequality uses Assumption \hyperref[assumption]{3}.\\
Similarly, when \( \tau \mid t \), we have:
\begin{align*}
e_{t+1} &= \mathbb{E} \left\|  \frac{1}{N} \sum_{i=1}^N \bm{e}_{t+1}^{(i)} \right\|_F^2 \\
&= \mathbb{E} \left\|  \mathcal{C}(\bm{\Delta}_{t}) - \bm{\Delta}_{t} \right\|_F^2 \\
&\leq (1-\delta) \mathbb{E} \left[ \|\bm{\Delta}_{t}\|_F^2 \right] \\
&\leq (1-\delta)(1 + \beta)e_t + (1-\delta)(1 + \frac{1}{\beta}) G^2 ,
\tag{12}\label{b2}
\end{align*}
where the first inequality uses Lemma \ref{lemma2}, the second inequality use's Young's inequality and Cauchy's inequality, the last inequality uses Assumption \hyperref[assumption]{3}. Apply ~\eqref{b1},\eqref{b2} in a period of $\tau$ steps, we have:
\begin{align*}
e_{n\tau} &\leq (1 + \beta)e_{n\tau-1} + (1 + \frac{1}{\beta}) G^2 \\
&\leq \cdots \leq (1 + \beta)^{\tau-1}e_{(n-1)\tau+1} + (1 + \frac{1}{\beta}) \left(1 + (1+\beta) + \cdots + (1+\beta)^{\tau-2}\right) G^2 \\
&\leq (1-\delta)(1 + \beta)^{\tau}e_{(n-1)\tau} + (1 + \frac{1}{\beta})\tau(1+\beta)^{\tau} G^2.
\tag{13}\label{b3}
\end{align*}
Apply ~\eqref{b3} with \( e_0 = 0 \), we have:
\begin{align*}
e_{n\tau} \leq \frac{(1 + \frac{1}{\beta})\tau(1+\beta)^{\tau}G^2}{1 - (1-\delta)(1+\beta)^\tau} .
\tag{14}\label{b4}
\end{align*}
For any \( t = k\tau + q, \ 1 \leq q \leq \tau \), similarly we have:
\begin{align*}
e_{t} &\leq (1 + \beta)^{q}e_{k\tau} + (1 + \frac{1}{\beta})q(1+\beta)^{q-1} G^2 \\
&\leq (1 + \beta)^{\tau}e_{k\tau} + (1 + \frac{1}{\beta})\tau(1+\beta)^{\tau} G^2 \\
&\leq \frac{(1 + \frac{1}{\beta})\tau(1+\beta)^{\tau}G^2}{1 - (1-\delta)(1+\beta)^\tau} \left( \delta(1+\beta)^\tau + 1 \right) \\
&\leq \frac{2(1 + \frac{1}{\beta})\tau(1+\beta)^{2\tau}G^2}{1 - (1-\delta)(1+\beta)^\tau} , \tag{15}\label{b5}
\end{align*}
\noindent where the first inequality uses ~\eqref{b1} and the third inequality uses ~\eqref{b4}. If we substitute \( \beta = \frac{\delta}{4\tau} \) into ~\eqref{b5}, then \( (1+\beta)^\tau \leq e^{\beta \tau} = e^{\frac{\delta}{4}} \leq 1 + \frac{\delta}{2} \leq \frac{3}{2} \) (as \( \delta \leq 1 \)) and \( 1 + \frac{1}{\beta} = 1 + \frac{4\tau}{\delta} \leq \frac{5\tau}{\delta} \), we get:
\begin{align*}
\mathbb{E} \left\|  \frac{1}{N} \sum_{i=1}^N \bm{e}_{t}^{(i)} \right\|_F^2 
= e_t &\leq \frac{45\tau^2G^2}{2\delta} \cdot \frac{1}{1 - (1-\delta)(1 + \frac{\delta}{2})} 
\leq \frac{45\tau^2G^2}{\delta^2} .
\end{align*}
\end{proof}

\begin{lemma}
\label{lemma4}
For any \( t \geq 0 \), we have
\[
\mathbb{E} \left\|  \bm{m}_t \right\|_F^2 \leq \frac{92\tau^2G^2}{(1-\mu)^2\delta^2} .
\]
\end{lemma}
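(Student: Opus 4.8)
The plan is to reduce Lemma~\ref{lemma4} to Lemma~\ref{lemma3} by unrolling the momentum recursion. Recall from \eqref{powerSGD-3} that $\bm{m}_t = \mu\bm{m}_{t-1} + \widehat{\bm{\Delta}}_t$ with $\bm{m}_{-1}=0$ and $\widehat{\bm{\Delta}}_t = \mathcal{C}(\bm{\Delta}_t)$, so that
\[
\bm{m}_t = \sum_{s=0}^{t} \mu^{t-s}\,\mathcal{C}(\bm{\Delta}_s).
\]
First I would bound the second moment of each term $\mathcal{C}(\bm{\Delta}_s)$ uniformly in $s$. By Lemma~\ref{lemma1} we have $\|\mathcal{C}(\bm{\Delta}_s)\|_F^2 \le \|\bm{\Delta}_s\|_F^2$, and since $\bm{\Delta}_s = \frac{1}{N}\sum_{i=1}^N(\bm{e}_s^{(i)} + \bm{g}_s^{(i)})$, Young's inequality with parameter $\beta=1$ together with Cauchy--Schwarz gives $\mathbb{E}\|\bm{\Delta}_s\|_F^2 \le 2\,\mathbb{E}\|\frac{1}{N}\sum_i \bm{e}_s^{(i)}\|_F^2 + 2\,\mathbb{E}\|\frac{1}{N}\sum_i \bm{g}_s^{(i)}\|_F^2$. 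The first term is controlled by Lemma~\ref{lemma3} and the second by Assumption~\ref{ass:uniform}, yielding $\mathbb{E}\|\bm{\Delta}_s\|_F^2 \le \frac{90\tau^2 G^2}{\delta^2} + 2G^2$. Since $\tau\ge 1$ and $\delta = r/n \le 1$, the additive $2G^2$ is at most $\frac{2\tau^2 G^2}{\delta^2}$, hence $\mathbb{E}\|\mathcal{C}(\bm{\Delta}_s)\|_F^2 \le \frac{92\tau^2 G^2}{\delta^2}$ for every $s$.

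Next I would combine this per-term bound with the geometric weights. Writing $W := \sum_{s=0}^t \mu^{t-s} \le \frac{1}{1-\mu}$, the triangle inequality followed by Cauchy--Schwarz (equivalently, Jensen applied to the normalized weights $\mu^{t-s}/W$) gives
\[
\|\bm{m}_t\|_F^2 \;=\; \Big\|\sum_{s=0}^t \mu^{t-s}\mathcal{C}(\bm{\Delta}_s)\Big\|_F^2 \;\le\; W\sum_{s=0}^t \mu^{t-s}\,\|\mathcal{C}(\bm{\Delta}_s)\|_F^2.
\]
Taking expectations and inserting the uniform bound from the previous step,
\[
\mathbb{E}\|\bm{m}_t\|_F^2 \;\le\; W\sum_{s=0}^t \mu^{t-s}\cdot\frac{92\tau^2 G^2}{\delta^2} \;=\; W^2\cdot\frac{92\tau^2 G^2}{\delta^2} \;\le\; \frac{92\tau^2 G^2}{(1-\mu)^2\delta^2},
\]
which is the claim.

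As for difficulty, there is no serious obstacle: Lemma~\ref{lemma4} is essentially a corollary of Lemma~\ref{lemma3}. The only points requiring care are (i) choosing the Young parameter ($\beta=1$) and then absorbing the leftover $2G^2$ into $\frac{2\tau^2G^2}{\delta^2}$ via $\tau\ge1,\ \delta\le1$ so the constant lands exactly at $92$, and (ii) summing the geometric series twice---once for the Cauchy--Schwarz normalizing factor $W$ and once for the weighted average of the per-term bounds---to obtain the $(1-\mu)^{-2}$ dependence. Everything else is routine bookkeeping.
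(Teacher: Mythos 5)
Your proof is correct and follows essentially the same route as the paper: bound $\mathbb{E}\|\mathcal{C}(\bm{\Delta}_s)\|_F^2$ uniformly by $\tfrac{92\tau^2G^2}{\delta^2}$ via Lemma~\ref{lemma1}, Lemma~\ref{lemma3}, and Assumption~\ref{ass:uniform} (absorbing the $2G^2$ using $\tau\ge1$, $\delta\le1$, exactly as the paper's bound $2(\tfrac{45\tau^2G^2}{\delta^2}+G^2)\le\tfrac{92\tau^2G^2}{\delta^2}$ does), then apply weighted Cauchy--Schwarz to the unrolled momentum sum and sum the geometric series twice. No gaps.
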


\begin{proof}
\begin{align*}
\mathbb{E}[\| \mathcal{C}(\bm{\Delta}_t) \|_F^2] &\leq \mathbb{E}[\|\bm{\Delta}_t\|_F^2] \\
&= \mathbb{E} \left\| \frac{1}{N} \sum_{i=1}^N \left( \bm{e}_{t}^{(i)} + \bm{g}_{t}^{(i)} \right) \right\|_F^2 \\
&\leq 2 \left( \mathbb{E} \left\| \frac{1}{N} \sum_{i=1}^N \bm{e}_{t}^{(i)} \right\|_F^2 +  \mathbb{E}\left[\left\|\frac{1}{N}\sum_{i=1}^N   \bm{g}_{t}^{(i)} \right\|_F^2 \right] \right) \\
&\leq 2 \left( \frac{45\tau^2G^2}{\delta^2} + G^2 \right) 
\leq \frac{92\tau^2G^2}{\delta^2} ,\tag{16}\label{b6}
\end{align*}
\noindent where the first inequality uses Lemma ~\ref{lemma1}, the second inequality uses Cauchy's inequality and the last inequality uses Lemma ~\ref{lemma3}. Applying ~\eqref{b6} and use Cauchy's inequality, we have: 
\begin{align*}
\mathbb{E} \left\| \bm{m}_t \right\|_F^2 
&= \mathbb{E} \left[ \left\| \sum_{k=0}^t \mu^{t - k}\mathcal{C}(\bm{\Delta}_k) \right\|_F^2 \right] \\
&\leq \left( \sum_{k=0}^t \mu^{t - k} \right) \left( \sum_{k=0}^t \mu^{t - k} \mathbb{E} \left[ \| \mathcal{C}(\bm{\Delta}_k) \|_F^2 \right] \right) \\
&\leq \left( \sum_{k=0}^t \mu^{t - k} \right)^2 \frac{92\tau^2G^2}{\delta^2} \\
&\leq \frac{92 \tau^2 G^2}{(1 - \mu)^2\delta^2} .
\end{align*}
\end{proof}

\begin{theorem}{(Convergence of PowerSGD+)}
\label{appendixtheorem2}
Under Assumption ~\ref{assumption}--~\ref{ass:uniform}, Define $\delta=r/n$ and $\Delta F=f(\bm{X}_0)-\underset{}{\text{min}}\ f(\bm{X})$, PowerSGD+ with MSGD (Algorithm \hyperref[powersgdalgo]{2}) converges as 
\begin{align*}
&\frac{1}{T}\sum_{t=0}^{T - 1} \mathbb{E} \left[ \|\nabla f(\bm{X}_t)\|_F^2 \right]
\leq\frac{4(1-\mu)}{\eta T}\Delta F +\frac{2L\eta \sigma^2}{(1 - \mu) N}+\frac{548L^2\eta^2\tau^2G^2 }{(1 - \mu)^4\delta^2} .
\end{align*}
\end{theorem}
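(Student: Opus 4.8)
The plan is to follow the standard descent-lemma-plus-error-feedback route, but with the key observation that the compression error (captured by $\bm{e}_t^{(i)}$ and the momentum $\bm{m}_t$) is controlled by Lemmas~\ref{lemma3}--\ref{lemma4}, which already did the hard work of bounding the accumulated residuals in terms of $\tau$, $\delta$, and $G$. First I would introduce the standard virtual (shifted) iterate $\widetilde{\bm{X}}_t := \bm{X}_t - \frac{\eta\mu}{1-\mu}\bm{m}_{t-1} - \frac{\eta}{1-\mu}\bar{\bm{e}}_t$, where $\bar{\bm{e}}_t := \frac{1}{N}\sum_i \bm{e}_t^{(i)}$, so that one step of the shifted sequence behaves like plain SGD on $f$: one checks that $\widetilde{\bm{X}}_{t+1} - \widetilde{\bm{X}}_t = -\frac{\eta}{1-\mu}\bar{\bm{g}}_t$ where $\bar{\bm{g}}_t = \frac{1}{N}\sum_i \bm{g}_t^{(i)}$, using the momentum recursion in \eqref{powerSGD-3}, the linearity of $\mathcal{C}$, and the residual update $\bm{e}_{t+1}^{(i)} = \bm{\Delta}_t^{(i)} - \widehat{\bm{\Delta}}_t^{(i)}$ (so that $\bar{\bm{e}}_{t+1} + \tfrac1N\sum_i\widehat{\bm\Delta}_t^{(i)} = \bar{\bm{e}}_t + \bar{\bm{g}}_t$). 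This telescoping identity is the crux that removes the compression operator from the effective dynamics.

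Next I would apply the $L$-smoothness descent lemma (Assumption~\ref{assumption}) along $\widetilde{\bm{X}}_t$, take conditional expectation using unbiasedness and the variance bound $\sigma^2$ from Assumption~\ref{asp:SGO} (which, averaged over $N$ independent nodes, gives $\sigma^2/N$), and obtain a recursion of the form
\begin{align*}
\mathbb{E}[f(\widetilde{\bm{X}}_{t+1})] \le \mathbb{E}[f(\widetilde{\bm{X}}_t)] - \frac{\eta}{2(1-\mu)}\mathbb{E}\|\nabla f(\widetilde{\bm{X}}_t)\|_F^2 + \frac{L\eta^2\sigma^2}{2(1-\mu)^2 N} + (\text{bias terms}).
\end{align*}
The bias terms come from replacing $\nabla f(\widetilde{\bm{X}}_t)$ by $\nabla f(\bm{X}_t)$, which costs $L^2\|\widetilde{\bm{X}}_t - \bm{X}_t\|_F^2 = L^2\big\|\frac{\eta\mu}{1-\mu}\bm{m}_{t-1} + \frac{\eta}{1-\mu}\bar{\bm{e}}_t\big\|_F^2$ after a Young-inequality split. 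This is exactly where Lemmas~\ref{lemma3} and~\ref{lemma4} get plugged in: $\mathbb{E}\|\bar{\bm{e}}_t\|_F^2 \le 45\tau^2 G^2/\delta^2$ and $\mathbb{E}\|\bm{m}_t\|_F^2 \le 92\tau^2 G^2/((1-\mu)^2\delta^2)$, so each bias term is $O(\eta^2 L^2 \tau^2 G^2/((1-\mu)^4\delta^2))$, which after bookkeeping gives the $\tfrac{548 L^2\eta^2\tau^2 G^2}{(1-\mu)^4\delta^2}$ summand.

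Finally I would telescope the recursion from $t=0$ to $T-1$, use $f(\widetilde{\bm{X}}_0) = f(\bm{X}_0)$ (since $\bm{m}_{-1}=0$, $\bm{e}_0^{(i)}=0$) and $f(\widetilde{\bm{X}}_T) \ge \min f$, divide through by $\frac{\eta T}{4(1-\mu)}$ (absorbing constants), to land on the stated bound. The last sentence of the theorem — choosing $\eta$ properly — follows by the usual step-size balancing: setting $\eta \asymp \min\{(1-\mu), \sqrt{N(1-\mu)^2\Delta F/(L\sigma^2 T)}, ((1-\mu)^4\delta^2\Delta F/(L^2\tau^2 G^2 T))^{1/3}\}$ and matching the three terms yields the $\sqrt{32L\sigma^2\Delta F/(NT)} + 8L\Delta F/T + O((L\tau G\Delta F)^{2/3}/(T^{2/3}\delta^{2/3}(1-\mu)^{2/3}))$ form in Theorem~\ref{theorem2}. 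The main obstacle, and the only genuinely non-routine step, is verifying the shifted-iterate telescoping identity cleanly in the presence of \emph{both} momentum and error feedback simultaneously — getting the coefficients $\frac{\eta\mu}{1-\mu}$ and $\frac{\eta}{1-\mu}$ exactly right so that the compression operator and the momentum buffer both cancel — since a sign or coefficient slip there propagates into every subsequent bound; everything after that is mechanical application of smoothness and the two preparatory lemmas.
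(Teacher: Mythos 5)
Your proposal is correct and follows essentially the same route as the paper: the combined shifted iterate you define is exactly the paper's $\bm{Z}_t = \bm{X}_t - \frac{\eta}{1-\mu}\bar{\bm{e}}_t - \frac{\eta\mu}{1-\mu}\bm{m}_{t-1}$, the telescoping identity $\bm{Z}_{t+1}-\bm{Z}_t = -\frac{\eta}{1-\mu}\bar{\bm{g}}_t$ is verified the same way, and the descent lemma, the $L^2\|\bm{X}_t-\bm{Z}_t\|_F^2$ bias bound via Lemmas~\ref{lemma3}--\ref{lemma4}, the telescoping, and the step-size balancing all match the paper's argument. No gaps.
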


\begin{proof}
Define:
\begin{align*}
\tilde{\bm{X}}_t &= \bm{X}_t - \frac{\eta}{1 - \mu} \frac{1}{N} \sum_{i=1}^N \bm{e}_{t}^{(i)} ,\\
\bm{Z}_t &= \tilde{\bm{X}}_t - \frac{\eta \mu}{1 - \mu} \bm{m}_{t-1}\ .
\end{align*}
Then we have:
\begin{align*}
\bm{Z}_{t+1}
=& \tilde{\bm{X}}_{t+1} - \frac{\eta \mu}{1 - \mu} \bm{m}_t \\
=& \bm{X}_{t+1} - \frac{\eta}{1 - \mu} \frac{1}{N} \sum_{i=1}^N \bm{e}_{t+1}^{(i)} - \frac{\eta \mu}{1 - \mu} \bm{m}_t \\
=& \bm{X}_t - \frac{\eta}{1 - \mu} \frac{1}{N} \sum_{i=1}^N \bm{e}_{t+1}^{(i)} - \frac{\eta}{1 - \mu} \bm{m}_t \\
=& \bm{X}_t - \frac{\eta}{1 - \mu} \frac{1}{N} \sum_{i=1}^N \bm{e}_{t+1}^{(i)} - \frac{\eta}{1 - \mu} \cdot\\& \left( \mu \bm{m}_{t-1} - \frac{1}{N} \sum_{i=1}^N \bm{e}_{t+1}^{(i)} + \frac{1}{N} \sum_{i=1}^N \bm{e}_{t}^{(i)} + \frac{1}{N} \sum_{i=1}^N \bm{g}_{t}^{(i)} \right) \\
=& \tilde{\bm{X}}_t - \frac{\eta \mu}{1 - \mu} \bm{m}_{t-1} - \frac{\eta}{1 - \mu} \frac{1}{N} \sum_{i=1}^N \bm{g}_{t}^{(i)} \\
=& \bm{Z}_t - \frac{\eta}{1 - \mu} \frac{1}{N} \sum_{i=1}^N \bm{g}_{t}^{(i)} .
\end{align*}

\noindent By the smoothness of the function \( f \), we get:
\begin{align*}
&\mathbb{E}_t[f(\bm{Z}_{t+1})]
\\\leq& f(\bm{Z}_t) + \langle \nabla f(\bm{Z}_t), \mathbb{E}_t[\bm{Z}_{t+1} - \bm{Z}_t] \rangle + \frac{L}{2} \mathbb{E}_t[\|\bm{Z}_{t+1} - \bm{Z}_t\|_F^2] \\
=& f(\bm{Z}_t) - \frac{\eta}{1 - \mu} \left\langle \nabla f(\bm{Z}_t), \mathbb{E}_t \left[ \frac{1}{N} \sum_{i=1}^N \bm{g}_{t}^{(i)} \right] \right\rangle + \frac{L \eta^2}{2(1 - \mu)^2} \mathbb{E}_t \left[ \left\| \frac{1}{N} \sum_{i=1}^N \bm{g}_{t}^{(i)} \right\|_F^2 \right] \\
=& f(\bm{Z}_t) - \frac{\eta}{1 - \mu} \langle \nabla f(\bm{Z}_t), \nabla f(\bm{X}_t) \rangle \\&+ \frac{L \eta^2}{2(1 - \mu)^2} \left[ \|\nabla f(\bm{X}_t)\|_F^2 + \mathbb{E}_t \left[ \left\| \frac{1}{N} \sum_{i=1}^N \bm{g}_{t}^{(i)} - \nabla f(\bm{X}_t) \right\|_F^2 \right] \right] \\
\leq& f(\bm{Z}_t) - \frac{\eta}{1 - \mu} \langle \nabla f(\bm{Z}_t), \nabla f(\bm{X}_t) \rangle + \frac{L \eta^2}{2(1 - \mu)^2} \|\nabla f(\bm{X}_t)\|_F^2 + \frac{L \eta^2 \sigma^2}{2(1 - \mu)^2 N} , \tag{17}\label{b7}
\end{align*}
\noindent where the first inequality uses Assumption ~\ref{assumption} and the last inequality uses Cauchy's inequality and Assumption ~\ref{asp:SGO}. Bound the second term \( -\mathbb{E}\!\left[\langle\nabla f(\boldsymbol{Z}_t), \nabla f(\boldsymbol{X}_t)\rangle\right] \) with Cauchy's inequality, we have:
\begin{align*}
-\mathbb{E}\!\left[\langle\nabla f(\boldsymbol{Z}_t), \nabla f(\boldsymbol{X}_t)\rangle\right] 
&= -\mathbb{E}\!\left[\|\nabla f(\boldsymbol{X}_t)\|_F^2\right] + \mathbb{E}\!\left[\langle\nabla f(\boldsymbol{X}_t) - \nabla f(\boldsymbol{Z}_t), \nabla f(\boldsymbol{X}_t)\rangle\right] \\
&\leq -\frac{1}{2}\mathbb{E}\!\left[\|\nabla f(\boldsymbol{X}_t)\|_F^2\right] + \frac{1}{2}\mathbb{E}\!\left[\|\nabla f(\boldsymbol{X}_t) - \nabla f(\boldsymbol{Z}_t)\|_F^2\right] ,\tag{18} \label{b8}
\end{align*}
And:
\begin{align*}
\mathbb{E}\!\left[\|\nabla f(\boldsymbol{X}_t) - \nabla f(\boldsymbol{Z}_t)\|_F^2\right] 
&\leq L^2 \mathbb{E}\!\left[\|\boldsymbol{X}_t - \boldsymbol{Z}_t\|_F^2\right] \\
&\leq 2L^2 \mathbb{E}\!\left[\|\boldsymbol{X}_t - \tilde{\boldsymbol{X}}_t\|_F^2\right] + 2L^2 \mathbb{E}\!\left[\|\tilde{\boldsymbol{X}}_t - \boldsymbol{Z}_t\|_F^2\right] \\
&= 2L^2\eta^2 \mathbb{E}\!\left[\left\|\frac{1}{N} \sum_{i=1}^N \boldsymbol{e}_{t}^{(i)}\right\|_F^2\right] + \frac{2L^2\eta^2\mu^2}{(1 - \mu)^2} \mathbb{E}\!\left[\|\boldsymbol{m}_{t-1}\|_F^2\right] \\
&\leq \frac{90L^2\eta^2\tau^2G^2}{\delta^2} + \frac{184L^2\eta^2\tau^2\mu^2G^2}{(1 - \mu)^4\delta^2} ,\tag{19} \label{b9}
\end{align*}
\noindent where the last inequality follows Lemma ~\ref{lemma3},~\ref{lemma4}. Applying ~\eqref{b8} to ~\eqref{b7}, ~\eqref{b9} to ~\eqref{b7} and taking expectations, we have:
\begin{align*}
\mathbb{E}\left[f(\bm{Z}_{t+1})\right]
&\leq \mathbb{E}[f(\bm{Z}_t)]
  - \left( \frac{\eta}{2(1 - \mu)} - \frac{L\eta^2}{2(1 - \mu)^2} \right) \mathbb{E} \left[ \|\nabla f(\bm{X}_t)\|_F^2 \right] 
  + \frac{L\eta^2 \sigma^2}{2(1 - \mu)^2 N} \\
  &+ \frac{45L^2\eta^3\tau^2G^2}{(1 - \mu)\delta^2} + \frac{92L^2\eta^3\tau^2\mu^2G^2}{(1 - \mu)^5\delta^2} .\tag{20}\label{b10}
\end{align*}

\noindent Taking total expectation and telescoping ~\eqref{b10} from \(0\) to \(T - 1\), we obtain:

\begin{align*}
&\left( \frac{\eta}{2(1 - \mu)} - \frac{L\eta^2}{2(1 - \mu)^2} \right) \sum_{t=0}^{T - 1} \mathbb{E} \left[ \|\nabla f(\bm{X}_t)\|_F^2 \right]\\
\leq&f(\bm{X}_0) - \mathbb{E}[f(\bm{Z}_T)]+\frac{L\eta^2 \sigma^2 T}{2(1 - \mu)^2 N} +\frac{45L^2\eta^3\tau^2G^2T}{(1-\mu)\delta^2}+\frac{92L^2\eta^3\tau^2\mu^2G^2T }{(1 - \mu)^5\delta^2} .\tag{21}\label{b11}
\end{align*}
\noindent When $\eta \leq \frac{1-\mu}{2L}$,$\frac{\eta}{2(1 - \mu)} - \frac{L\eta^2}{2(1 - \mu)^2} \geq \frac{\eta}{4(1 - \mu)}$, applying ~\eqref{b11} and taking average on \(T\), we get:
\begin{align*}
&\frac{1}{T}\sum_{t=0}^{T - 1} \mathbb{E} \left[ \|\nabla f(\bm{X}_t)\|_F^2 \right]\\
\leq&\frac{4(1-\mu)}{\eta T}(f(\bm{X}_0) - f^*)+\frac{2L\eta \sigma^2}{(1 - \mu) N} +\frac{180L^2\eta^2\tau^2G^2}{\delta^2}+\frac{368L^2\eta^2\tau^2\mu^2G^2 }{(1 - \mu)^4\delta^2}\\
\leq&\frac{4(1-\mu)}{\eta T}\Delta F+\frac{2L\eta \sigma^2}{(1 - \mu) N}+\frac{548L^2\eta^2\tau^2G^2 }{(1 - \mu)^4\delta^2} ,
\end{align*}
\noindent so we finished the proof.
\end{proof}

\begin{corollary}
In Theorem ~\ref{appendixtheorem2}, If we set 
$$\eta=\left(\frac{2L}{1-\mu}+\sqrt{\frac{L\sigma^2T}{2(1-\mu)^2N\Delta F}}+
\sqrt[3]{\frac{274L^2\tau^2G^2T}{(1-\mu)^5\delta^2\Delta F}}\right)^{-1},$$
we get:
\begin{align*}
&\frac{1}{T}\sum_{t=0}^{T - 1} \mathbb{E} \left[ \|\nabla f(\bm{X}_t)\|_F^2 \right]
\leq\sqrt{\frac{32L\sigma^2\Delta F}{NT}} +\frac{8L\Delta F}{T}+\frac{6\sqrt[3]{274}(L\tau G \Delta F)^{2/3} }{T^{2/3}\delta^{2/3}(1-\mu)^{2/3}} .
\end{align*}
\end{corollary}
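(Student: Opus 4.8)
The plan is to obtain the Corollary as a routine ``balanced stepsize'' substitution into the bound of Theorem~\ref{appendixtheorem2}. Abbreviate that bound as
\[
\frac{1}{T}\sum_{t=0}^{T-1}\mathbb{E}\!\left[\|\nabla f(\bm{X}_t)\|_F^2\right]\ \le\ \frac{A}{\eta}+B\eta+C\eta^2,\qquad A=\frac{4(1-\mu)\Delta F}{T},\quad B=\frac{2L\sigma^2}{(1-\mu)N},\quad C=\frac{548L^2\tau^2G^2}{(1-\mu)^4\delta^2},
\]
which Theorem~\ref{appendixtheorem2} guarantees whenever $\eta\le\frac{1-\mu}{2L}$. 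The prescribed stepsize is exactly $\eta=(d+e+f)^{-1}$ with $d=\frac{2L}{1-\mu}$, $e=\sqrt{B/A}$, and $f=(2C/A)^{1/3}$; indeed one computes $\sqrt{B/A}=\sqrt{L\sigma^2T/(2(1-\mu)^2N\Delta F)}$ and $(2C/A)^{1/3}=(274L^2\tau^2G^2T/((1-\mu)^5\delta^2\Delta F))^{1/3}$, matching the Corollary's formula. The first step is to note $\eta\le 1/d=\frac{1-\mu}{2L}$ since $e,f\ge0$, so the hypothesis of Theorem~\ref{appendixtheorem2} is met and its bound applies.

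Next I would expand the three pieces. Using $1/\eta=d+e+f$ gives $A/\eta=Ad+Ae+Af$, while $\eta\le 1/e$ and $\eta\le 1/f$ give $B\eta\le B/e$ and $C\eta^2\le C/f^2$. Hence
\[
\frac{1}{T}\sum_{t=0}^{T-1}\mathbb{E}\!\left[\|\nabla f(\bm{X}_t)\|_F^2\right]\ \le\ Ad+\Bigl(Ae+\tfrac{B}{e}\Bigr)+\Bigl(Af+\tfrac{C}{f^2}\Bigr).
\]
With the chosen $e$ the middle bracket collapses by AM--GM to $Ae+B/e=2\sqrt{AB}$, and with the chosen $f=(2C/A)^{1/3}$ the last bracket attains the minimum of $g\mapsto Ag+C/g^2$ over $g>0$, namely $3\cdot2^{-2/3}(A^2C)^{1/3}$. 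This reduces the whole estimate to $Ad+2\sqrt{AB}+3\cdot2^{-2/3}(A^2C)^{1/3}$.

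Finally I would substitute the explicit constants: $Ad=\frac{4(1-\mu)\Delta F}{T}\cdot\frac{2L}{1-\mu}=\frac{8L\Delta F}{T}$ gives the second summand; $2\sqrt{AB}=2\sqrt{\frac{8L\sigma^2\Delta F}{NT}}=\sqrt{\frac{32L\sigma^2\Delta F}{NT}}$ gives the first; and, since $A^2C=\frac{16\cdot548\,(L\tau G\Delta F)^2}{T^2(1-\mu)^2\delta^2}$ with $16\cdot548=2^6\cdot137$ and $137=274/2$, one gets $3\cdot2^{-2/3}(A^2C)^{1/3}=3\cdot2^{-2/3}\cdot2^2\cdot137^{1/3}\cdot\frac{(L\tau G\Delta F)^{2/3}}{T^{2/3}(1-\mu)^{2/3}\delta^{2/3}}=\frac{6\sqrt[3]{274}(L\tau G\Delta F)^{2/3}}{T^{2/3}\delta^{2/3}(1-\mu)^{2/3}}$, the third summand. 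I do not expect a genuine obstacle here; the only care needed is (i) confirming the stepsize constraint $\eta\le\frac{1-\mu}{2L}$ is automatically respected, and (ii) carrying the numerical constants exactly through the cube-root term — and even a looser choice of $\eta$ would yield the same $\mathcal{O}(1/\sqrt{NT}+1/T+1/T^{2/3})$ rate up to a slightly larger absolute constant.
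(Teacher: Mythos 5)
Your proposal is correct: the stepsize is exactly the inverse-sum-of-balancing-terms choice, the constraint $\eta\le\frac{1-\mu}{2L}$ is automatic since $\eta\le 1/d$, and the constants ($Ad=8L\Delta F/T$, $2\sqrt{AB}=\sqrt{32L\sigma^2\Delta F/(NT)}$, $3\cdot 2^{-2/3}(A^2C)^{1/3}=6\sqrt[3]{274}(L\tau G\Delta F)^{2/3}T^{-2/3}\delta^{-2/3}(1-\mu)^{-2/3}$) all check out exactly. The paper states the corollary without writing out a proof, and your argument is precisely the standard balanced-stepsize derivation the authors intend.
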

\section{Experiments details}
In this section, we show the training details of our experiments in Section \hyperref[section5]{5} for reproduction.
\subsection{Fine-tuning tasks on GLUE datasets} We fine-tune pretrained \texttt{RoBERTa-Base} models on GLUE benchmarks for 10 epochs on a 4 × 4090 24GB GPUs cluster with data parallelism at 4. We used AdamW as base optimizer and set compression rank to 4 for all compressors which refer to the original settings in \PowerSGD\  \cite{vogels2019powersgd}. Detailed hyper-parameters are illustrated in Table ~\ref{tab:finetuning_hparams}. In particular, "Restart steps" is only applicable to the \PowerSGD+\ and Galore compressor.  
\begin{table}[htb]
\label{glue-hyperparameters}
\centering
\small
\caption{Hyperparameters of fine-tuning \texttt{RoBERTa-Base} models on GLUE benchmark }
\label{tab:finetuning_hparams}
\begin{tabular}{lcccccccc}
\toprule
 & RTE & CoLA & SST2 & MNLI & QQP & STSB & MRPC & QNLI \\
\midrule
Total Batch Size & 16 & 32 & 32 & 64 & 32 & 32 & 64 & 32 \\
\# Epochs & 10 & 10 & 10 & 10 & 10 & 10 & 10 & 10 \\
Learning Rate & $2\times 10^{-5}$ & $2\times 10^{-5}$ & $2\times 10^{-5}$ & $2\times 10^{-5}$ & $2\times 10^{-5}$ & $2\times 10^{-5}$ & $4\times 10^{-5}$ & $2\times 10^{-5}$ \\
Weight decay & 0.01 & 0.02& 0.01 & 0.02 & 0.01 & 0.02 & 0.02 & 0.01 \\
Restart steps & 50 & 80& 80 & 200 & 200 & 50 & 80 & 100 \\
Rank Config. & \multicolumn{8}{c}{$r=4$} \\
AdamW betas & \multicolumn{8}{c}{(0.9,0.999)} \\
Lr-scheduler & \multicolumn{8}{c}{linear} \\warmup-ratio & \multicolumn{8}{c}{0.06} \\
Max Seq. Len. & \multicolumn{8}{c}{512} \\
\bottomrule
\end{tabular}
\end{table}

\subsection{Pre-training Tasks on C4 Dataset}
We pre-trained LLaMA models on C4 corpus for 10 epochs on a 4 × 4090 24GB GPUs cluster with data parallelism at 4. The 60M and 130M models are trained for 10k and 20k iterations, respectively, with the standard Adam optimizer. We validate across compression rank in $\{4,8\}$ and tuned the hyperparameters refer to \cite{zhao2024galore}. Detailed hyperparameters are illustrated
in Table ~\ref{c4-hyperparameters}. 
\begin{table*}[!htbp]
\centering
\caption{\small Hyperparameters of pre-training LLaMA models on C4 dataset}
\label{c4-hyperparameters}
\setlength{\tabcolsep}{2pt}
\small
\begin{tabular}{lcccccc}
\toprule
& \multicolumn{3}{c}{\textbf{Llama 60M}} & \multicolumn{3}{c}{\textbf{Llama 130M}} \\
\cmidrule(lr){2-4} \cmidrule(lr){5-7}
& Adam & PowerSGD & PowerSGD+ & Adam & PowerSGD & PowerSGD+ \\
\midrule
Training Steps & \multicolumn{3}{c}{10000} & \multicolumn{3}{c}{20000} \\
Warm-up Steps & \multicolumn{3}{c}{1000} & \multicolumn{3}{c}{2000} \\
Maximum Length & \multicolumn{3}{c}{256} & \multicolumn{3}{c}{256} \\
Batch Size & \multicolumn{3}{c}{512} & \multicolumn{3}{c}{512} \\
Batch Size per Device & \multicolumn{3}{c}{128} & \multicolumn{3}{c}{128} \\
Total Training Tokens & \multicolumn{3}{c}{1 310 720 000} & \multicolumn{3}{c}{2 621 440 000} \\
\midrule
Learning Rate & \multicolumn{3}{c}{$\{1\times 10^{-3}, 2\times 10^{-3}, 4\times 10^{-3}\}$} & \multicolumn{3}{c}{$\{1\times 10^{-3}, 2\times 10^{-3}, 4\times 10^{-3}\}$} \\
Warm-up Scheduling & \multicolumn{3}{c}{linear from 0\%} & \multicolumn{3}{c}{linear from 0\%} \\
Learning Rate Decay & \multicolumn{3}{c}{cosine to 0\%} & \multicolumn{3}{c}{cosine to 0\%} \\
Weight Decay & 0.0 & \hspace{5mm}0.0 & 0.0 & 0.0 & \hspace{5mm}0.0 & 0.0 \\
Gradient Clipping & 1.0 & \hspace{5mm}1.0 & 1.0 & 1.0 & \hspace{5mm}1.0 & 1.0 \\
\midrule
Restart steps & - & - & 200 & - & - & 200 \\
\bottomrule
\end{tabular}
\end{table*}

\end{document}